\tikzstyle{node}=[draw, circle, minimum size=5pt, fill=black, inner sep=0pt]
\newtheorem{theorem}{Theorem}
\newtheorem{lemma}{Lemma}
\newtheorem{corollary}{Corollary}
\theoremstyle{definition}
\theoremstyle{remark}
\newtheorem{remark}{Remark}
\DeclareRobustCommand{\lah}{\genfrac{\lfloor}{\rfloor}{0pt}{}}
\providecommand{\Prob}[1]{\mathbb{P}\{#1\}}
\providecommand{\card}[1]{\texttt{\#}#1}
\providecommand{\abs}[1]{\lvert#1\rvert}
\newcommand{\pset}[1]{\mathcal{P}_{#1}}
\newcommand{\Ex}{\mathbb{E}}
\newcommand{\psetn}{\mathcal{P}_{[n]}}
\newcommand{\res}[2]{#1|_#2}
\begin{document}
\title[Spectral decomposition for Bolthausen-Sznitman and Kingman coalescent]{A Spectral decomposition for the Bolthausen-Sznitman coalescent and the Kingman coalescent}
\author{Jonas Kukla}
\address{Mathematisches Institut\\
Eberhard Karls Universit\"at T\"ubingen\\
Auf der Morgenstelle 10\\
72076 T\"ubingen, Germany}
\email{jonas.kukla@uni-tuebingen.de}
\author{Helmut H.~Pitters}
\address{Department of Statistics\\
University of Oxford\\
1 South Parks Road\\
Oxford, OX1 3TG, UK}
\email{helmut.pitters@stats.ox.ac.uk}
\date{\today}
\begin{abstract}
  We consider both the Bolthausen-Sznitman and the Kingman coalescent restricted to the partitions of $\{1, \ldots, n\}.$ Spectral decompositions of the corresponding generators are derived. As an application we obtain a formula for the Green's functions and a short derivation of the well-known formula for the transition probabilities of the Bolthausen-Sznitman coalescent.
\end{abstract}

\maketitle

\section{Introduction}
An exchangeable coalescent process is a discrete or continuous-time Markov chain that encodes the dynamics of particles grouped into so-called blocks. As time passes, only mergers of some blocks may occur, and the rate at which a merger happens only depends on the current number of blocks, but not, for instance, on their sizes or the specific particles they contain. The theory of exchangeable coalescent processes has its origins in the study of genealogies in population genetics, culminating in the seminal work of Kingman \cite{MR671034}. In the context of population genetics Sagitov \cite{MR1742154} and later Sagitov and M\"{o}hle \cite{MR1880231} derived exchangeable coalescents as limiting genealogies of so-called Cannings models, which are discrete-time models of neutral haploid populations with exchangeable family sizes. This derivation is also implicit in the work of Donnelly and Kurtz \cite{donnelly1999}. Among exchangeable coalescent processes the so-called $\Lambda$-coalescents have received increasing attention in recent years. The latter were introduced independently by Donnelly and Kurtz \cite{donnelly1999}, Pitman \cite{MR1742892} and Sagitov \cite{MR1742154}. A $\Lambda$-coalescent $\{\Pi(t), t\geq 0\}$ is a time-homogeneous exchangeable coalescent process in continuous time with state space $\pset{\mathbb{N}},$ the set of partitions of the non-negative integers $\mathbb{N}\coloneqq \{ 1, 2, \ldots \}$, that only allows for one merger of blocks at any jump. It can be characterized via its restrictions $\{\Pi^n(t), t\geq 0\}$ to $[n]\coloneqq \{1, \ldots, n\}$ as follows. If at any given time $\Pi^n(t)$ contains $b\geq 2$ blocks, then any $2\leq k\leq b$ of these blocks merge at rate $\lambda_{b, k}\coloneqq \int_0^1 x^{k-2}(1-x)^{b-k}\Lambda(dx),$ where $\Lambda$ denotes a finite measure on the unit interval. This measure $\Lambda$ together with the initial state $\Pi(0)$ uniquely determines $\Pi,$ hence the name $\Lambda$-coalescent. 

In this note we consider both the Kingman coalescent $\Pi^K = \{\Pi^K(t), t\geq 0\}$ and the Bolthausen-Sznitman coalescent $\Pi^{BS} = \{\Pi^{BS}(t), t\geq 0\}$, which are both $\Lambda$-coalescents. For convenience we drop the superscripts $K$ and $BS$ when there is no risk of ambiguity. From its introduction Kingman's coalescent has been used in population genetics as a model approximating the genealogy of a sample drawn from a large neutral population of haploid individuals, i.e.~in this context the particles are interpreted as individuals in the population.
The Bolthausen-Sznitman coalescent was discovered by Bolthausen and Sznitman \cite{Bolthausensznitman98} in the context of the Sherrington-Kirkpatrick model for spin glasses in statistical physics. Goldschmidt and Martin \cite{EJP265} gave a construction of $\Pi^{BS}$ via a cutting of a random recursive tree. Bertoin and Le Gall \cite{raey} derived the Bolthausen-Sznitman coalescent as the genealogy of a continuous-state branching process. An increasing number of recent works suggest that the class of beta coalescents, which contains the Bolthausen-Sznitman coalescent, should provide better null models than Kingman's coalescent for the genealogy of highly fecund populations, see the introduction in~\cite{2013arXiv1305.6043B}. Neher and Hallatschek \cite{Neher08012013} argue that the Bolthausen-Sznitman coalescent occurs naturally as the genealogy in models of rapidly adapting asexual populations. Motivated by models of populations undergoing natural selection \cite{BrunetDerrida2006}, \cite{BrunetDerrida2007}, Berestycki, Berestycki and Schweinsberg \cite{berestycki2013} showed that the genealogy of a population governed by Branching Brownian Motion with absorption is again the Bolthausen-Sznitman coalescent.

The article is organized as follows. Our main results are spectral decompositions of the generator of the Bolthausen-Sznitman $n$-coalescent, Theorem \ref{thm:bs_spectral_decomposition}, respectively of the generator of Kingman's $n$-coalescent, Theorem \ref{thm:kingman_spectral_decomposition}. As Corollaries we obtain for the Bolthausen-Sznitman coalescent a derivation of the formula for the transition probabilites that goes back to \cite{Bolthausensznitman98}, and a formula for its Green's matrix. As a further application we obtain a spectral decomposition of the generator of the block counting process of the Bolthausen-Sznitman, respectively Kingman's coalescent.

\section{Results}\label{sec:results}
Let us introduce some notation. A partition of a set $A$ is a set, $\pi$ say, of nonempty pairwise disjoint subsets of $A$ whose union is $A$. The members of $\pi$ are called the blocks of $\pi.$ Let $\card{A}$ denote the cardinality of $A$ and let $\pset{A}$ denote the set of partitions of $A.$
\subsection{Bolthausen-Sznitman $n$-coalescent}
The Bolthausen-Sznitman $n$-coalescent $\Pi^{n, BS}=\{\Pi^{n, BS}(t), t\geq 0\}$ is obtained by choosing $\Lambda$ to be the uniform measure on $[0,1],$ that is the corresponding $Q$-matrix $Q\coloneqq Q^{n, BS}=(q_{\pi\rho})_{\pi,\rho\in\pset{[n]}}$ is given by
\begin{align}
  q_{\pi\rho} = \begin{cases}
    \frac{(\card{\rho}-1)!(\card{\pi}-\card{\rho}-1)!}{(\card{\pi}-1)!} & \text{if } \pi\prec\rho,\\
    -(\card{\pi}-1) & \text{if } \pi=\rho,\\
    0 & \text{otherwise,}
    \end{cases}
  \end{align}
  where $\pi\prec\rho$ if and only if $\rho$ is obtained by exactly one merger of blocks of $\pi.$
  In this section we only consider the Bolthausen-Sznitman $n$-coalescent and therefore write $\Pi$ instead of $\Pi^{n,BS}$.

For any two sets $A$ and $B\subseteq A$ and a partition $\pi\in\pset{A}$ we call $\res{\pi}{B}\coloneqq \{C\cap B\colon C\in\pi, C\cap B\neq\emptyset\}\in\pset{B}$ the restriction of $\pi$ to $B$.
\begin{theorem}[Spectral decomposition of the Bolthausen-Sznitman coalescent]\label{thm:bs_spectral_decomposition}
Let $L=(l_{\pi\rho})_{\pi,\rho\in\pset{[n]}}$ and $R=(r_{\pi\rho})_{\pi,\rho\in\pset{[n]}}$ be matrices defined by
\begin{align}
   l_{\pi\rho} &\coloneqq\begin{cases}
   (-1)^{\card{\pi}-\card{\rho}}\frac{(\card{\rho}-1)!}{(\card{\pi}-1)!} & \text{if } \pi\leq\rho,\\
   0 & \text{otherwise,}
   \end{cases}
   \intertext{and}
   r_{\pi\rho} &\coloneqq\begin{cases}
   \frac{(\card{\rho}-1)!}{(\card{\pi}-1)!}\prod_{B\in\rho}(\card{\res{\pi}{B}}-1)! & \text{if } \pi\leq\rho,\\
   0 & \text{otherwise,}
   \end{cases}
\end{align}
where $\pi\leq\rho$ if and only if each block of $\pi$ is contained in a block of $\rho.$ Then a spectral decomposition of $Q$ is given by $Q=RDL,$ where $D=(d_{\pi\rho})_{\pi,\rho\in\pset{[n]}}$ is defined by $d_{\pi\pi}=-(\card{\pi}-1)$ and $d_{\pi\rho}=0$ if $\pi\neq\rho$. In particular, $l_{\pi\pi}=r_{\pi\pi}=1$ for any $\pi\in\pset{[n]}.$
\end{theorem}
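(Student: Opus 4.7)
Since $l_{\pi\pi} = r_{\pi\pi} = 1$ and both $L$ and $R$ are supported on the relation $\leq$, ordering $\pset{[n]}$ so that finer partitions come first makes both matrices triangular with unit diagonal, hence invertible. The plan is therefore to establish the two identities (i) $LR = \Id$, which gives $R = L^{-1}$, and (ii) $LQ = DL$, which says each row of $L$ indexed by $\pi$ is a left eigenvector of $Q$ with eigenvalue $-(\card{\pi}-1)$; combined, these yield $Q = L^{-1}DL = RDL$.

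For (i), I fix $\pi \leq \sigma$ and expand $(LR)_{\pi\sigma} = \sum_{\pi\leq\rho\leq\sigma} l_{\pi\rho}r_{\rho\sigma}$. Under $\pi \leq \rho \leq \sigma$, the intermediate $\rho$ is determined by its restrictions $(\res{\rho}{B})_{B \in \sigma}$; each $\res{\rho}{B}$ is a partition of $B$ coarser than $\res{\pi}{B}$, equivalently a partition $\tau \in \pset{\res{\pi}{B}}$ of the $n_B \coloneqq \card{\res{\pi}{B}}$ blocks of $\pi$ lying in $B$. The factor $(\card{\rho}-1)!$ in $l_{\pi\rho}$ cancels the one in $r_{\rho\sigma}$, and the sum factorises over $B \in \sigma$ as
\begin{equation*}
(LR)_{\pi\sigma} \;=\; \frac{(\card{\sigma}-1)!}{(\card{\pi}-1)!}\prod_{B \in \sigma}\left[\sum_{\tau \in \pset{\res{\pi}{B}}}(-1)^{n_B-\card{\tau}}(\card{\tau}-1)!\right].
\end{equation*}
A short exponential-generating-function check, exploiting $\sum_{n\geq 1}\frac{x^n}{n!}\sum_{\tau\in\pset{[n]}}(-1)^{\card{\tau}-1}(\card{\tau}-1)! = \log(1 + (e^x-1)) = x$, shows that the inner bracket equals the indicator $[n_B = 1]$. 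Hence $(LR)_{\pi\sigma}$ vanishes unless every block of $\sigma$ contains a unique block of $\pi$, i.e.\ unless $\pi = \sigma$, in which case the prefactor is $1$.

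For (ii), the cases $\pi \not\leq \sigma$ and $\pi = \sigma$ are immediate (in the latter only $\rho = \pi$ contributes, giving $q_{\pi\pi} = -(\card{\pi}-1)$). For $\pi < \sigma$ I split $(LQ)_{\pi\sigma}$ into the diagonal piece $l_{\pi\sigma}q_{\sigma\sigma} = -(\card{\sigma}-1)l_{\pi\sigma}$ and the off-diagonal pieces indexed by $\rho$ with $\pi \leq \rho \prec \sigma$. Any such $\rho$ is obtained from $\sigma$ by splitting a single block $B \in \sigma$ into $\card{\tau}\geq 2$ parts via some $\tau \in \pset{\res{\pi}{B}}$, so $\card{\rho} = \card{\sigma} + \card{\tau} - 1$. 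A direct computation gives the pleasant cancellation
\begin{equation*}
\frac{l_{\pi\rho}\,q_{\rho\sigma}}{l_{\pi\sigma}} \;=\; (-1)^{\card{\tau}-1}(\card{\tau}-2)!,
\end{equation*}
independent of $\card{\pi}$, $\card{\sigma}$ and of $B$. Dividing the target $(LQ)_{\pi\sigma} = -(\card{\pi}-1)l_{\pi\sigma}$ by $l_{\pi\sigma}$ reduces (ii) to
\begin{equation*}
\sum_{B\in\sigma}\sum_{\substack{\tau\in\pset{\res{\pi}{B}}\\\card{\tau}\geq 2}}(-1)^{\card{\tau}}(\card{\tau}-2)! \;=\; \card{\pi} - \card{\sigma} \;=\; \sum_{B\in\sigma}(n_B - 1),
\end{equation*}
which factors blockwise into the single-set identity $\sum_{\tau\in\pset{[n]},\,\card{\tau}\geq 2}(-1)^{\card{\tau}}(\card{\tau}-2)! = n-1$. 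I would verify the latter by computing the EGF $\sum_{m\geq 2}\frac{(-1)^m (e^x-1)^m}{m(m-1)}$ via the partial fraction $\frac{1}{m(m-1)} = \frac{1}{m-1} - \frac{1}{m}$; two logarithmic series collapse to $xe^x - e^x + 1 = \sum_{n\geq 1}(n-1)\frac{x^n}{n!}$, as required.

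The main obstacle I anticipate is the bookkeeping in step (ii): correctly parametrising the $\rho$'s with $\pi \leq \rho \prec \sigma$ by pairs $(B,\tau)$, tracking the exponents of $(-1)$ and the factorials, and verifying the cancellation $l_{\pi\rho}q_{\rho\sigma}/l_{\pi\sigma} = (-1)^{\card{\tau}-1}(\card{\tau}-2)!$ cleanly. Once that reduction is in place, everything decouples over $B \in \sigma$ and the two required identities are routine generating-function exercises.
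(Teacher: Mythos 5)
Your proposal is correct, but it runs the argument in the dual direction to the paper's. The paper first shows that the columns of $R$ are \emph{right} eigenvectors, i.e.\ $QR=RD$ (Lemma \ref{lemma:bs_right_eigenvector}): there the sum runs over $\sigma$ with $\pi\prec\sigma\leq\rho$, parametrised by a block $B\in\rho$ and a subset $C\subseteq\res{\pi}{B}$ of blocks to be merged, and the computation collapses via the telescoping sum $\sum_{c=2}^{m}\frac{1}{c(c-1)}=1-\frac1m$; the inverse relation is then obtained as $RL=\Id$ by specialising at $x=1$ the identity of Lemma \ref{lem:technical_lemma} for $\sum_{\sigma}r_{\pi\sigma}x^{\card{\sigma}-1}l_{\sigma\rho}$, proved with $x^{\overline{n}}=\sum_k{n\brack k}x^k$. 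You instead verify $LQ=DL$ (\emph{left} eigenvectors), summing over $\rho$ with $\pi\leq\rho\prec\sigma$, parametrised by a block $B\in\sigma$ and a partition $\tau\in\pset{\res{\pi}{B}}$ with $\card{\tau}\geq2$; your cancellation $l_{\pi\rho}q_{\rho\sigma}/l_{\pi\sigma}=(-1)^{\card{\tau}-1}(\card{\tau}-2)!$ and the resulting reduction to $\sum_{k\geq2}(-1)^k(k-2)!{n\brace k}=n-1$ are correct, as is your Möbius-type evaluation $\sum_{\tau\in\pset{[n]}}(-1)^{\card{\tau}-1}(\card{\tau}-1)!=1_{\{n=1\}}$ giving $LR=\Id$; both EGF checks are sound, and $LR=\Id$ plus $LQ=DL$ indeed yield $Q=RDL$. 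The trade-off: the paper's $x$-dependent Lemma \ref{lem:technical_lemma} is reused verbatim for the transition probabilities (Corollary \ref{cor:bs_transition_probabilities}) and the Green's matrix (Corollary \ref{cor:bs_greens_matrix}), and its right-eigenvector lemma has a probabilistic second proof via random recursive trees ($r_{\pi\rho}=\Prob{\rho\sqsubset\mathfrak T_\pi}$), neither of which your computation supplies directly; in exchange, your route needs no Stirling-first-kind or rising-factorial identities for the theorem itself, each of your two steps decoupling blockwise into a single clean generating-function identity (one being the standard Möbius computation on the partition lattice).
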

The right eigenvectors $r_{\pi\rho}$ may be interpreted as the probability that a random recursive tree on the label set $\pi$ can be cut down to a tree on the label set $\rho,$ see the paragraph "A connection with random recursive trees" in Section \ref{subsec:bs-coalescent} below. As an application of this spectral decomposition we derive the well-known formula for the transition probabilities of $\Pi$ given by Bolthausen and Sznitman in \cite{Bolthausensznitman98}, Proposition 1.4.
\begin{corollary}[Transition probabilities of the Bolthausen-Sznitman coalescent]\label{cor:bs_transition_probabilities}
  For any two partitions $\pi, \rho\in\pset{[n]}$ and any time $t\geq 0$ the transition probabilities $p_{\pi\rho}(t)\coloneqq \Prob{\Pi(t)=\rho|\Pi(0)=\pi}$ of the Bolthausen-Sznitman $n$-coalescent are given by
  \begin{align*}
    p_{\pi\rho}(t) &= (-1)^{\card{\rho}}e^t\frac{(\card{\rho}-1)!}{(\card{\pi}-1)!} 
\prod_{B\in\rho} (-e^{-t})^{\overline{\card{\res{\pi}{B}}}},
  \end{align*}
  where for $x\in\mathbb{R},$ $k\in\mathbb{N}$ we denote by $x^{\overline{k}}\coloneqq x(x+1)\cdots (x+k-1)$ the ascending factorial power with the convention $x^{\overline{0}}\coloneqq 1$.
\end{corollary}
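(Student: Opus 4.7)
The plan is to exponentiate the spectral decomposition from Theorem \ref{thm:bs_spectral_decomposition}. Since $L$ and $R$ are triangular with respect to the refinement order and have unit diagonal, they are mutual inverses, so $Q=RDL$ gives $e^{tQ}=R\,e^{tD}\,L$. Reading off the $(\pi,\rho)$-entry and using $d_{\sigma\sigma}=-(\card{\sigma}-1)$, I would write
\begin{align*}
p_{\pi\rho}(t)=\sum_{\pi\leq\sigma\leq\rho}r_{\pi\sigma}\,e^{-(\card{\sigma}-1)t}\,l_{\sigma\rho}.
\end{align*}
Plugging in the explicit entries, the $(\card{\sigma}-1)!$ factors cancel and I am left with
\begin{align*}
p_{\pi\rho}(t)=(-1)^{\card{\rho}}\,e^{t}\,\frac{(\card{\rho}-1)!}{(\card{\pi}-1)!}\sum_{\pi\leq\sigma\leq\rho}(-e^{-t})^{\card{\sigma}}\prod_{C\in\sigma}(\card{\res{\pi}{C}}-1)!.
\end{align*}

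The next step is to factorize the inner sum across the blocks of $\rho$. A partition $\sigma$ with $\pi\leq\sigma\leq\rho$ is uniquely determined by the tuple of restrictions $(\res{\sigma}{B})_{B\in\rho}$, each of which is a partition of $B$ coarser than $\res{\pi}{B}$; equivalently, $\res{\sigma}{B}$ can be identified with a partition of the $\card{\res{\pi}{B}}$ blocks of $\res{\pi}{B}$ viewed as atoms. Since $\card{\sigma}=\sum_{B\in\rho}\card{\res{\sigma}{B}}$ and the product over $C\in\sigma$ decomposes accordingly, the sum factorizes as $\prod_{B\in\rho}S_B$ with
\begin{align*}
S_B=\sum_{\tau\geq\res{\pi}{B}}(-e^{-t})^{\card{\tau}}\prod_{C\in\tau}(\card{\res{\pi}{C}}-1)!,
\end{align*}
and the proof reduces to showing $S_B=(-e^{-t})^{\overline{\card{\res{\pi}{B}}}}$ for each $B\in\rho$.

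This last identification is the only non-mechanical step, and I would derive it from the classical identity
\begin{align*}
\sum_{\tau\in\pset{[m]}}x^{\card{\tau}}\prod_{C\in\tau}(\card{C}-1)!=x^{\overline{m}},
\end{align*}
which holds because $\prod_{C\in\tau}(\card{C}-1)!$ counts the ways to cyclically order the parts of $\tau$, so summing over $\tau$ with exactly $k$ blocks produces the unsigned Stirling number of the first kind $c(m,k)$, and $\sum_{k}c(m,k)x^{k}=x^{\overline{m}}$ is standard. Applying this with $m=\card{\res{\pi}{B}}$ (treating the blocks of $\res{\pi}{B}$ as atoms) and $x=-e^{-t}$ produces the required factor, and substitution yields exactly the formula of the corollary. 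The main obstacle will simply be keeping the bookkeeping straight when identifying $\res{\sigma}{B}$ with a partition of the atoms of $\res{\pi}{B}$; once that is set up, the Stirling identity closes the argument.
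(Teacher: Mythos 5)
Your argument is correct and is essentially the paper's own proof: exponentiating the spectral decomposition gives $p_{\pi\rho}(t)=\sum_{\sigma\in[\pi,\rho]}r_{\pi\sigma}e^{-(\card{\sigma}-1)t}l_{\sigma\rho}$, and your block-by-block factorization over $B\in\rho$ followed by the identity $\sum_{\tau\in\pset{[m]}}x^{\card{\tau}}\prod_{C\in\tau}(\card{C}-1)!=x^{\overline{m}}$ is exactly the content of the paper's Lemma \ref{lem:technical_lemma} evaluated at $x=e^{-t}$. One small correction: triangularity with unit diagonal does not by itself make $L$ and $R$ mutual inverses; that fact is Lemma \ref{lemma:bs_left_eigenmatrix} (the $x=1$ case of the same computation) and is what licenses $e^{tRDL}=Re^{tD}L$, so cite it (or Theorem \ref{thm:bs_spectral_decomposition}, of which it is part) rather than the triangularity.
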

Thanks to the spectral decomposition, Theorem \ref{thm:bs_spectral_decomposition}, we obtain a formula for the Green's matrix $G=(g_{\pi\rho})_{\pi,\rho\in\pset{[n]}}$ of the Bolthausen-Sznitman $n$-coalescent defined by $g_{\pi\rho}\coloneqq \int_0^\infty p_{\pi\rho}(t)dt$. Recall that $g_{\pi\rho}=\Ex[\int_0^{\infty} 1_{\{\Pi(t)=\rho\}}dt|\Pi(0)=\pi]$ is the expected total time that $\Pi$ spends in $\rho$ starting from $\pi$. We denote by ${i\brack j}$ the Stirling permutation numbers or unsigned Stirling numbers of the first kind, which count the number of permutations of a set of $i$ elements with $j$ cycles.
\begin{corollary}[Green's matrix of the Bolthausen-Sznitman coalescent]\label{cor:bs_greens_matrix}
  The Green's matrix $G=(g_{\pi\rho})_{\pi,\rho\in\pset{[n]}}$ is given by
  \begin{align}
    g_{\pi\rho} &= \begin{cases}
      (-1)^{\card{\rho}}\frac{(\card{\rho}-1)!}{(\card{\pi}-1)!}\sum_{(k_B)_{B\in\rho}\in\mathbb{N}^{\card{\rho}}}\frac{(-1)^{\abs k}}{\abs k-1}\prod_{B\in\rho}{\card{\res{\pi}{B}}\brack k_B} & \text{if } \pi\leq\rho\neq \{[n]\},\\
      \infty & \text{if }\pi\leq\rho=\{[n]\},\\
      0 & \text{otherwise},
      \end{cases}
  \end{align}
  where $\abs k\coloneqq\sum_{B\in\rho} k_B.$
\end{corollary}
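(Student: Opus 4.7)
The plan is to compute $p_{\pi\rho}(t)$ directly from the spectral decomposition $Q=RDL$ in Theorem~\ref{thm:bs_spectral_decomposition} and then integrate in $t.$ Since $R$ and $L$ are mutually inverse (as implicit in calling $Q=RDL$ a spectral decomposition) and $D$ is diagonal with $d_{\sigma\sigma}=-(\card{\sigma}-1),$ one has $e^{tQ}=R\,e^{tD}\,L$ and therefore
\begin{align*}
p_{\pi\rho}(t) \;=\; \sum_{\sigma\in\pset{[n]}} r_{\pi\sigma}\,l_{\sigma\rho}\,e^{-t(\card{\sigma}-1)}
\;=\; \sum_{\pi\leq\sigma\leq\rho} r_{\pi\sigma}\,l_{\sigma\rho}\,e^{-t(\card{\sigma}-1)},
\end{align*}
the support restriction $\pi\leq\sigma\leq\rho$ coming from the definitions of $R$ and $L.$

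Next I would handle the divergence. The integral $\int_0^\infty e^{-t(\card{\sigma}-1)}\,dt$ equals $1/(\card{\sigma}-1)$ when $\card{\sigma}\geq 2$ and $+\infty$ when $\card{\sigma}=1.$ The only partition in $\pset{[n]}$ with a single block is $\{[n]\},$ and since $\sigma\leq\rho,$ this divergent contribution appears exactly when $\rho=\{[n]\},$ giving $g_{\pi\rho}=\infty$ in that case. For $\pi\leq\rho\neq\{[n]\}$ every term is finite and the factors $(\card{\sigma}-1)!$ in $r_{\pi\sigma}$ and $l_{\sigma\rho}$ cancel, leaving
\begin{align*}
g_{\pi\rho} \;=\; \frac{(\card{\rho}-1)!}{(\card{\pi}-1)!}\sum_{\pi\leq\sigma\leq\rho}\frac{(-1)^{\card{\sigma}-\card{\rho}}}{\card{\sigma}-1}\prod_{C\in\sigma}(\card{\res{\pi}{C}}-1)!.
\end{align*}

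The heart of the argument is then to recognise this sum as a product of Stirling numbers of the first kind. Any $\sigma$ with $\pi\leq\sigma\leq\rho$ is determined by its restrictions $\sigma|_B,$ one for each $B\in\rho,$ and each $\sigma|_B$ may be viewed as a partition of the finite set $\res{\pi}{B}$ (treating the $\pi$-blocks contained in $B$ as points). Writing $k_B\coloneqq\card{\sigma|_B}$ and $\abs k\coloneqq\sum_{B\in\rho}k_B=\card{\sigma},$ the product factorises as
\begin{align*}
\prod_{C\in\sigma}(\card{\res{\pi}{C}}-1)! \;=\; \prod_{B\in\rho}\prod_{C\in\sigma|_B}(\card{\res{\pi}{C}}-1)!.
\end{align*}
Grouping the outer sum by the vector $(k_B)_{B\in\rho}$ and invoking the classical identity
\begin{align*}
\sum_{\tau\in\pset{[m]},\,\card{\tau}=k}\prod_{C\in\tau}(\card{C}-1)! \;=\; {m\brack k},
\end{align*}
which holds because $(\card{C}-1)!$ counts the cyclic orderings of the set $C,$ recasts the sum as $\sum_{(k_B)\in\mathbb{N}^{\card{\rho}}}\frac{(-1)^{\abs k-\card{\rho}}}{\abs k-1}\prod_{B\in\rho}{\card{\res{\pi}{B}}\brack k_B}.$ Extracting the factor $(-1)^{\card{\rho}}$ out front yields the claimed formula.

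The main obstacle I anticipate is the reindexing/factorisation bookkeeping, in particular the careful identification of $\sigma|_B$ with a partition of the set $\res{\pi}{B}$ so that $\card{\res{\pi}{C}}$ is precisely the cardinality of the corresponding block in that partition; once this is set up cleanly, the Stirling identity applies blockwise. The remaining ingredients — the diagonalisation furnished by Theorem~\ref{thm:bs_spectral_decomposition}, the termwise integration of a finite linear combination of exponentials, and the cycle-partition identity for ${m\brack k}$ — are routine or classical.
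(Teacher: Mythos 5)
Your proposal is correct and follows essentially the same route as the paper: both integrate the spectral representation $p_{\pi\rho}(t)=\sum_{\sigma\in[\pi,\rho]}r_{\pi\sigma}e^{-t(\card{\sigma}-1)}l_{\sigma\rho}$ termwise and convert the $\sigma$-sum into the product of Stirling cycle numbers via the isomorphism $[\pi,\rho]\cong\bigtimes_{B\in\rho}\pset{\res{\pi}{B}}$. The only (immaterial) difference is the order of operations — the paper performs the Stirling-number regrouping first, in Lemma \ref{lem:technical_lemma}, and then integrates in $t$, while you integrate first and regroup afterwards.
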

\begin{remark}
Notice that since the return probability to any state $\pi\in\pset{[n]},$ $\pi\neq \{[n]\},$ equals $0$ for any $\Lambda$-coalescent $\Pi$ there is a close connection between the Green's matrix $G$ of $\Pi$ and its hitting probabilities defined by
  \[h(\pi, \rho)\coloneqq \Prob{\Pi \text{ hits }\rho\text{ when started from }\pi},\]
namely via $g_{\pi\rho}=h(\pi, \rho)/q_{\rho}=h(\pi, \rho)/(1-\card{\rho}),$ cf.~\cite{DBLP:books/daglib/0095301}, p.~146, where $q_\rho\coloneqq \sum_{\sigma\in\pset{[n]}, \sigma\neq\rho}q_{\rho\sigma}$ is the total rate in $\rho$.
\end{remark}

For a set $A$ and $j\in\mathbb{N}$ let $\pset{A, j}$ denote the set of partitions of $A$ into $j$ blocks. Moreover, ${i\brace j}$ denotes the Stirling partition numbers or Stirling numbers of the second kind, which count the number of partitions into $j$ blocks of a set of $i$ elements.
\begin{remark}
From Corollary \ref{cor:bs_greens_matrix} it follows by a technical but straightforward computation that for any $\pi\in\pset{[n], i}$ and $j\le i$
\begin{align*}
\sum_{\rho\in\pset{[n], j}} g_{\pi\rho} = (-1)^{j} \frac{(j-1)!}{(i-1)!} \sum_{k=j}^i \frac{(-1)^k}{k-1}{i\brack k}{k\brace j}
\end{align*}
in agreement with the entries of the Green's matrix of the block counting process of the Bolthausen-Sznitman coalescent provided in the proof of Corollary 1.5 in \cite{ECP3464}. 
\end{remark}

As a further application of the spectral decomposition of $Q,$ Theorem \ref{thm:bs_spectral_decomposition}, we derive a spectral decomposition of the generator $Q'= (q'_{ij})_{i, j\in [n]}$ of the block counting process $\{N(t), t\geq 0\}$ of the Bolthausen-Sznitman $n$-coalescent defined by $N(t)\coloneqq \card{\Pi(t)}.$ It is well-known that the matrix $Q'$ is given by
\begin{align*}
q'_{ij}= \begin{cases}
  \frac{i}{(i-j)(i-j+1)} & \text{if } i>j,\\
  1-i & \text{if } i=j,\\
  0 & \text{otherwise.}
\end{cases}
\end{align*}
In \cite{ECP3464} this spectral decomposition of $Q'$ was derived by means of generating functions without recourse to the partition-valued process $\Pi$.
\begin{corollary}\label{cor:bs_spectral_decomposition_block_counting_proces}
Let $L'=(l'_{ij})_{i,j\in [n]},$ $R'=(r'_{ij})_{i,j\in [n]},$ and $D'=(d'_{ij})_{i,j\in [n]}$ be matrices given by
  \begin{align}
    l'_{ij}\coloneqq (-1)^{i-j}\frac{(j-1)!}{(i-1)!}{i\brace j}, \quad r'_{ij}\coloneqq \frac{(j-1)!}{(i-1)!}{i\brack j},\quad d'_{ij}\coloneqq (1-i)1_{\{i=j\}},
  \end{align}
  Then a spectral decomposition of $Q'$ is given by $Q'=R'D'L'.$
\end{corollary}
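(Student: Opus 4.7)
The plan is to pass from Theorem~\ref{thm:bs_spectral_decomposition} to the block-counting level by \emph{lumping}: since the rates $q_{\pi\rho}$ depend on $\pi,\rho$ only through their block counts, the block-counting process is Markov and its generator entries satisfy $q'_{ij}=\sum_{\sigma\in\pset{[n], j}} q_{\pi\sigma}$ for any fixed $\pi\in\pset{[n], i}$. I will show that $R'$ and $L'$ arise as the corresponding block-count row sums of $R$ and $L$, from which both $Q'=R'D'L'$ and $L'=(R')^{-1}$ drop out formally.

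Step 1 is to prove that for any $\pi\in\pset{[n], i}$ and any $k\in [n]$,
\begin{align*}
\sum_{\substack{\rho\in\pset{[n], k}\\ \rho\geq\pi}} r_{\pi\rho}=r'_{ik},\qquad
\sum_{\substack{\sigma\in\pset{[n], k}\\ \sigma\geq\pi}} l_{\pi\sigma}=l'_{ik}.
\end{align*}
The $L$-identity is immediate: $l_{\pi\sigma}$ takes the constant value $(-1)^{i-k}(k-1)!/(i-1)!$ on the set $\{\sigma\in\pset{[n], k}:\sigma\geq\pi\}$, which is in natural bijection with $\pset{[i], k}$ and hence has cardinality ${i\brace k}$. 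For the $R$-identity, the same bijection rewrites the left-hand side as
\begin{align*}
\frac{(k-1)!}{(i-1)!}\sum_{\tau\in\pset{[i], k}}\prod_{S\in\tau}(\card{S}-1)!,
\end{align*}
and the classical combinatorial equality $\sum_{\tau\in\pset{[i], k}}\prod_{S\in\tau}(\card{S}-1)!={i\brack k}$ (a permutation of $[i]$ with $k$ cycles is a partition of $[i]$ into cycles together with a cyclic ordering on each cycle, of which there are $(\card{S}-1)!$ on a cycle of size $\card{S}$) then yields $r'_{ik}$.

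Step 2 assembles these identities. For any $\pi\in\pset{[n], i}$ and $j\in [n]$, writing $Q=RDL$ and applying Step 1 twice,
\begin{align*}
q'_{ij}
=\sum_{\sigma\in\pset{[n], j}}\sum_{\rho} r_{\pi\rho}\, d_{\rho\rho}\, l_{\rho\sigma}
=\sum_{k}(1-k)\, l'_{kj}\sum_{\rho\in\pset{[n], k}} r_{\pi\rho}
=\sum_{k} r'_{ik}\, d'_{kk}\, l'_{kj},
\end{align*}
which is $(R'D'L')_{ij}$. Applying the same aggregation to the identity $(RL)_{\pi\sigma}=\delta_{\pi\sigma}$ gives $R'L'=I$, so the decomposition is genuinely spectral. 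The only nonroutine input is the Stirling identity $\sum_{\tau\in\pset{[i], k}}\prod_{S\in\tau}(\card{S}-1)!={i\brack k}$; apart from this combinatorial step, everything is a clean lumping computation, and I do not anticipate a substantive obstacle.
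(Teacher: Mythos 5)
Your proposal is correct and follows essentially the same route as the paper: lump via $q'_{ij}=\sum_{\rho\in\pset{[n],j}}q_{\pi\rho}$, insert $Q=RDL$ from Theorem~\ref{thm:bs_spectral_decomposition}, and aggregate the eigenvector entries over block counts using $\card{\{\rho\geq\pi\colon\card{\rho}=k\}}={i\brace k}$ and $\sum_{\tau\in\pset{[i],k}}\prod_{S\in\tau}(\card{S}-1)!={i\brack k}$, exactly the identities the paper uses inline. Isolating the row-sum identities $\sum_{\rho\in\pset{[n],k}}r_{\pi\rho}=r'_{ik}$, $\sum_{\sigma\in\pset{[n],k}}l_{\pi\sigma}=l'_{ik}$ as a separate step and additionally checking $R'L'=I$ is only a reorganization (and mild strengthening) of the paper's single chained computation.
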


\subsection{Kingman's $n$-coalescent}
Kingman's $n$-coalescent $\Pi^{n,K}=\{\Pi^{n,K}(t), t\geq 0\}$ is obtained by choosing $\Lambda$ to be $\delta_0,$ the Dirac measure in $0,$ that is the corresponding $Q$-matrix $Q\coloneqq Q^{n,K}=(q_{\pi\rho})_{\pi,\rho\in\psetn}$ is given by
\begin{align}
  q_{\pi\rho} = \begin{cases}
    1 & \text{if } \pi\lessdot\rho,\\
    -{\card{\pi}\choose 2} & \text{if } \pi=\rho,\\
    0 & \text{otherwise},
    \end{cases}
\end{align}
where $\pi\lessdot\rho$ if and only if $\pi\leq\rho$ and $\card{\pi}-\card{\rho}=1.$ In other words, the jump chain of Kingman's $n$-coalescent is the directed simple random walk on the partition lattice $\pset{[n]}$, where at each step the chain jumps into a coarser partition. From now on we only consider Kingman's $n$-coalescent and therefore write $\Pi$ instead of $\Pi^{n,K}$.
\begin{theorem}[Spectral decomposition of Kingman's coalescent]\label{thm:kingman_spectral_decomposition}
Define the matrices $L=(l_{\pi\rho})_{\pi,\rho\in\pset{[n]}}$ and $R=(r_{\pi\rho})_{\pi,\rho\in\pset{[n]}}$ by
\begin{align}
l_{\pi\rho} &\coloneqq \begin{cases}
 (-1)^{\card{\pi}-\card{\rho}}\frac{(\card{\pi}+\card{\rho}-2)!}{(2\card{\pi}-2)!}\prod_{B\in\rho}\card{\res{\pi}{B}}! & \text{if }\pi\leq\rho,\\
 0 & \text{otherwise,}
\end{cases}
\intertext{and}
    r_{\pi\rho} &\coloneqq
     \begin{cases}
      \frac{(2\card{\rho}-1)!}{(\card{\pi}+\card{\rho}-1)!}\prod_{B\in\rho}\card{\res{\pi}{B}}! & \text{if } \pi\leq\rho,\\
      0 & \text{otherwise}.
    \end{cases}
\end{align}
Then a spectral decomposition of $Q$ is given by $Q=RDL,$ where $D=(d_{\pi\rho})_{\pi,\rho\in\pset{[n]}}$ is defined by $d_{\pi\pi}\coloneqq -\binom{\card{\pi}}{2}$ and $d_{\pi\rho}=0$ if $\pi\neq\rho$. In particular, $l_{\pi\pi}=r_{\pi\pi}=1$ for any $\pi\in\pset{[n]}.$
\end{theorem}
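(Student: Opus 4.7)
The plan is to establish two matrix identities: (i) $LR = I$ and (ii) $QR = RD$. Since both $L$ and $R$ are supported on the refinement order with $l_{\pi\pi} = r_{\pi\pi} = 1$, they are unitriangular with respect to any linear extension of $\leq$, so $R$ is invertible. Once (i) is in hand we have $L = R^{-1}$, and (ii) then yields $Q = RDR^{-1} = RDL$.

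I would handle (ii) first, as it is the cleaner half. For $\pi \leq \sigma$ I would split
\[
(QR)_{\pi\sigma} = -\binom{\card{\pi}}{2}\, r_{\pi\sigma} + \sum_{\rho\colon \pi \lessdot \rho \leq \sigma} r_{\rho\sigma}
\]
and parametrise each merger $\rho$ by a block $C \in \sigma$ together with two blocks $B_1, B_2 \in \res{\pi}{C}$ that get fused. A direct calculation from the formula for $r$ gives $r_{\rho\sigma}/r_{\pi\sigma} = (\card{\pi}+\card{\sigma}-1)/\card{\res{\pi}{C}}$. Summing over the $\binom{\card{\res{\pi}{C}}}{2}$ pairs in each $C$ and invoking $\sum_{C\in\sigma}(\card{\res{\pi}{C}}-1) = \card{\pi}-\card{\sigma}$ then produces $\sum_\rho r_{\rho\sigma} = \bigl(\binom{\card{\pi}}{2} - \binom{\card{\sigma}}{2}\bigr) r_{\pi\sigma}$; this cancels with the diagonal term to leave $(QR)_{\pi\sigma} = -\binom{\card{\sigma}}{2}\, r_{\pi\sigma} = (RD)_{\pi\sigma}$.

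For (i), I would parametrise each $\rho$ with $\pi \leq \rho \leq \sigma$ by the tuple $(\res{\rho}{C})_{C \in \sigma}$, where each $\res{\rho}{C}$ is a coarsening of $\res{\pi}{C}$. Writing $a_C \coloneqq \card{\res{\pi}{C}}$ and $b_C \coloneqq \card{\res{\rho}{C}}$, the classical identity
\[
\sum_{\tau \in \pset{[a], b}} \prod_{B \in \tau} \card{B}! = \frac{a!\,(a-1)!}{b!\,(b-1)!\,(a-b)!},
\]
obtained by summing over ordered compositions of $a$ into $b$ positive parts, evaluates the inner sum at fixed $b_C$. The only factor in $l_{\pi\rho}\, r_{\rho\sigma}$ that does not factorise over $C \in \sigma$ is the prefactor $(\card{\pi}+\card{\rho}-2)!/(\card{\rho}+\card{\sigma}-1)!$, which depends on $\rho$ only through $\card{\rho}$. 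After substituting $d_C \coloneqq b_C - 1$ and grouping by $d \coloneqq \sum_C d_C$, the Vandermonde-type identity $\sum_{\sum_C d_C = d}\prod_C \binom{a_C - 1}{d_C} = \binom{\card{\pi}-\card{\sigma}}{d}$ collapses the multi-index sum to a single alternating sum in $d$.

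The hard part will then be to check that this residual sum vanishes whenever $\pi < \sigma$. The surviving $d$-dependent factor is $(\card{\pi}+\card{\sigma}+d-2)!/(2\card{\sigma}+d-1)!$, a polynomial in $d$ of degree $\card{\pi}-\card{\sigma}-1$; the standard finite-difference identity $\sum_{d=0}^{N}(-1)^d\binom{N}{d} P(d) = 0$ for polynomials $P$ of degree less than $N$, applied with $N = \card{\pi}-\card{\sigma}$, delivers the vanishing. The diagonal case $\pi = \sigma$ is immediate, since then only $\rho = \pi$ survives and $l_{\pi\pi}\,r_{\pi\pi} = 1$. Combined with (ii), this completes the spectral decomposition $Q = RDL$.
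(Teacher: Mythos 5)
Your proposal is correct, but it reaches the decomposition by a route that differs from the paper's in the half that identifies $L$ with $R^{-1}$. The eigenvalue half is essentially the same computation in different clothing: the paper (Lemma \ref{lemma:kingman_right_eigenvector}) writes $r_{\pi\rho}$ in terms of the maximal-chain count $m(\pi,\rho)$ of Lemma \ref{lemma:maximal_chains} and uses the recursion $\sum_{\sigma\colon\pi\lessdot\sigma}m(\sigma,\rho)=m(\pi,\rho)$, whereas you compute the ratio $r_{\rho\sigma}/r_{\pi\sigma}=(\card{\pi}+\card{\sigma}-1)/\card{\res{\pi}{C}}$ directly and sum over the $\binom{\card{\res{\pi}{C}}}{2}$ admissible pair-mergers; both give $\sum_{\rho\colon\pi\lessdot\rho\leq\sigma}r_{\rho\sigma}=\bigl(\binom{\card{\pi}}{2}-\binom{\card{\sigma}}{2}\bigr)r_{\pi\sigma}$, hence $QR=RD$. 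The genuine divergence is in proving $L=R^{-1}$: the paper never evaluates the sum $\sum_{\rho}l_{\pi\rho}r_{\rho\sigma}$; instead it proves separately that $L$ is a matrix of left eigenvectors ($LQ=DL$, Lemma \ref{lemma:kingman_left_eigenvectors}, again via chain counts) and then argues that the inverse of $R$ is the unique upper-triangular left-eigenvector matrix with unit diagonal, so it must equal $L$. You instead verify $LR=I$ head-on, using the interval isomorphism $[\pi,\sigma]\cong\bigtimes_{C\in\sigma}\pset{\res{\pi}{C}}$, the Lah-number evaluation $\sum_{\tau\in\pset{[a],b}}\prod_{B\in\tau}\card{B}!=\lah{a}{b}$ (so that each block contributes $b_C!\,\lah{a_C}{b_C}=a_C!\binom{a_C-1}{b_C-1}$), a Vandermonde convolution to collapse the multi-index sum, and the vanishing of the $N$-th finite difference of the polynomial $d\mapsto(\card{\pi}+\card{\sigma}+d-2)!/(2\card{\sigma}+d-1)!$ of degree $N-1$, $N=\card{\pi}-\card{\sigma}$; I checked these steps and they go through. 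Your route makes Lemma \ref{lemma:kingman_left_eigenvectors} unnecessary and is structurally parallel to what the paper does for the Bolthausen--Sznitman coalescent (where the inverse is computed directly in Lemma \ref{lemma:bs_left_eigenmatrix} via Lemma \ref{lem:technical_lemma}), at the cost of a heavier alternating-sum identity; the paper's uniqueness argument avoids that identity entirely and yields the left-eigenvector property of $L$ as an explicit by-product, with the maximal-chain combinatorics also feeding the hitting-probability remark.
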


\begin{remark}
Although the spectral decomposition of $Q,$ Theorem \ref{thm:kingman_spectral_decomposition}, directly yields a spectral decomposition of the matrix of transition probabilites $p_{\pi\rho}(t)\coloneqq \Prob{\Pi(t)=\rho|\Pi(0)=\pi},$ analoguously to the one given in \eqref{eq:bs_transition_probabilities} for the Bolthausen-Sznitman $n$-coalescent, this expression is not particularly handy. In fact, a better approach to calculating $p_{\pi\rho}(t)$ is the one given by Kingman in \cite{MR671034} Equation (2.5).
\end{remark}

As in the case of the Bolthausen-Sznitman coalescent, consider the block counting process $\{N(t), t\geq 0\}$ of Kingman's $n$-coalescent defined by $N(t)\coloneqq \card{\Pi(t)}$ and let $Q'=(q'_{ij})_{i,j\in [n]}$ denote the generator of $N(t)$. From Theorem \ref{thm:kingman_spectral_decomposition} we obtain a spectral decomposition of $Q'$.

\begin{corollary}\label{cor:kingman_spectral_decomposition_block_counting_proces}
  Let $L'=(l'_{ij})_{i,j\in [n]},$ $R'=(r'_{ij})_{i,j\in [n]}$ be matrices defined by
  \begin{align}
    l'_{ij}\coloneqq (-1)^{i+k}\frac{(i+j-2)!}{(2i-2)!}\lah{i}{j},\qquad r'_{ij}\coloneqq \frac{(2j-1)!}{(i+j-1)!}\lah{i}{j},
  \end{align}
where $\lah{i}{j}\coloneqq\binom{i-1}{j-1}\frac{i!}{j!}$ denotes the unsigned Lah numbers which count the number of partitions into $j$ blocks of a set of $i$ elements, where the elements in each block are ordered. Then $Q'=R'D'L'$ is a spectral decomposition of $Q',$ where $D'=(d'_{ij})_{i,j\in\pset{[n]}}$ is defined by $d'_{ii}=-\binom{i}{2}$ and $d'_{ij}=0$ if $i\neq j.$
\end{corollary}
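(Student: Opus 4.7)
The plan is to lift the spectral decomposition $Q=RDL$ from Theorem \ref{thm:kingman_spectral_decomposition} to one for $Q'$ via the lumping that makes $N(t)=\card{\Pi(t)}$ a Markov chain with generator $Q'$. The sole combinatorial input is the identity
\begin{align*}
\sum_{\sigma\in\pset{[i],j}}\prod_{G\in\sigma}\abs{G}! \;=\; \lah{i}{j},
\end{align*}
which follows from the interpretation of the unsigned Lah number $\lah{i}{j}$ as the count of partitions of $[i]$ into $j$ non-empty linearly ordered blocks (choose an unordered $j$-block partition, then linearly order each block).

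First I would verify that $r'_{\cdot,j}$ is a right eigenvector of $Q'$ with eigenvalue $-\binom{j}{2}$. Fix $\pi\in\pset{[n]}$ with $\card{\pi}=i$; since $\pi\leq\rho$ puts the coarsenings $\rho$ of $\pi$ with $\card{\rho}=j$ in bijection with the partitions of the $i$-element set of blocks of $\pi$ into $j$ parts, and since $\prod_{B\in\rho}\card{\res{\pi}{B}}!$ depends on $\rho$ only through the sizes of these parts, the Lah-number identity gives
\begin{align*}
\sum_{\rho\,:\,\card{\rho}=j}r_{\pi\rho} \;=\; \frac{(2j-1)!}{(i+j-1)!}\lah{i}{j} \;=\; r'_{ij}.
\end{align*}
Each column $r_{\cdot,\rho}$ of $R$ with $\card{\rho}=j$ is a right eigenvector of $Q$ with eigenvalue $-\binom{j}{2}$, so their sum is too; as it depends only on $\card{\pi}$, it descends via lumping to a right eigenvector of $Q'$ with the same eigenvalue, giving $Q'R'=R'D'$.

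Second, I would run the same calculation for the rows of $L$: for any $\pi$ with $\card{\pi}=i$,
\begin{align*}
\sum_{\sigma\,:\,\card{\sigma}=k}l_{\pi\sigma} \;=\; (-1)^{i-k}\frac{(i+k-2)!}{(2i-2)!}\lah{i}{k} \;=\; l'_{ik}.
\end{align*}
Summing the orthogonality $LR=I$ from Theorem \ref{thm:kingman_spectral_decomposition} over $\rho$ with $\card{\rho}=j$ then gives, for any such $\pi$,
\begin{align*}
1_{\{i=j\}} \;=\; \sum_{\sigma}l_{\pi\sigma}\sum_{\rho\,:\,\card{\rho}=j}r_{\sigma\rho} \;=\; \sum_{k}l'_{ik}r'_{kj},
\end{align*}
hence $L'R'=I$. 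Combined with $Q'R'=R'D'$ from the previous step, this yields $Q'=R'D'L'$.

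I do not expect a substantial obstacle. The only point that is not pure bookkeeping is the Lah-number identity together with the observation that both $\prod_{B\in\rho}\card{\res{\pi}{B}}!$ in $r_{\pi\rho}$ and in $l_{\pi\rho}$ depend on $\pi$ and $\rho$ only through the profile of the induced partition of the blocks of $\pi$; once these are recorded, the argument is automatic.
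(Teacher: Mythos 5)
Your proof is correct and follows essentially the same route as the paper: both lift Theorem \ref{thm:kingman_spectral_decomposition} to the block counting chain by collapsing sums over partitions with a fixed number of blocks via the identity $\lah{i}{j}=\sum_{\sigma\in\pset{[i],j}}\prod_{B\in\sigma}\card{B}!$, applied once to $R$ and once to $L$. The only organizational difference is that the paper carries out a single direct computation of $q'_{ij}=\sum_{\rho\in\pset{[n],j}}\sum_{\sigma\in[\pi,\rho]}r_{\pi\sigma}d_{\sigma\sigma}l_{\sigma\rho}$ (which at the same time exhibits the lumped rates as independent of the representative $\pi$), whereas you collapse $QR=RD$ and $LR=I$ separately and then combine them --- a harmless repackaging of the same ingredients.
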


\section{Proofs}
In order to prove our results, we need some notions and facts from the theory of lattices, that we collect from \cite{Stanley:2011:ECV:2124415}. Recall that a partially ordered set (poset for short) $(P, \leq)$ is a set $P$ together with a binary relation $\leq$ satisfying:
\begin{enumerate}
\item For all $p\in P,$ $p\leq p$ (reflexivity).
\item If $p\leq q$ and $q\leq p,$ then $p=q$ (antisymmetry).
\item If $p\leq q$ and $q\leq r,$ then $p\leq r$ (transitivity).
\end{enumerate}
Recall that two posets $P, Q$ are called isomorphic, in which case we write $P\cong Q,$ if there exists an order-preserving bijection $\phi\colon P\to Q$ whose inverse is order-preserving. The cartesian product $P\times Q$ of two posets $P, Q$ is defined on the set $\{(p, q)\colon p\in P, q\in Q\}$ by letting $(p,q)\leq(p',q')$ in $P\times Q$ if and only if $p\leq p'$ in $P$ and $q\leq q'$ in $Q$.
For $p, q\in P$ an upper bound of $p$ and $q$ is an element $r\in P$ such that $p,q\leq r$. A least upper bound of $p$ and $q$ is an upper bound $s\in P$ of $p$ and $q$ such that for any upper bound $r$ of $p$ and $q$ one has $s\leq r$. Clearly, if a least upper bound of two elements $p$ and $q$ exists, it is unique. A greatest lower bound is defined in complete analogy. A lattice $L$ is a poset with the property that any two of its elements have a least upper bound and a greatest lower bound. It is well-known, that $\pset{[n]}$ together with the relation $\leq$ as defined in Theorem \ref{thm:bs_spectral_decomposition} is a lattice, the so-called partition lattice. For $\pi,\rho\in\pset{[n]}$ with $\pi\leq\rho$ we call the set $[\pi, \rho]\coloneqq\{\sigma\in\pset{[n]}\colon \pi\leq\sigma\leq\rho\}$ an interval. We will make repeated use of the isomorphism
  \begin{align}
    [\pi, \rho]\cong \bigtimes_{B\in\rho} \pset{\res{\pi}{B}},
  \end{align}
  cf.~Example 3.10.4 in \cite{Stanley:2011:ECV:2124415}. For more information on posets in general and the partition lattice in particular the reader is referred to \cite{Stanley:2011:ECV:2124415}. Evidently, we have $\pset{[n]}=[\Delta_{[n]}, \{[n]\}],$ where for any set $A$ we let $\Delta_A\coloneqq \{\{a\}\colon a\in A\}$ be the partition of $A$ into singletons.
  
  Using the notation we just introduced, the $n$-$\Lambda$-coalescent $\Pi^n$ is a Markov chain with state space $\pset{[n]},$ initial state $\Delta_{[n]}$ and $Q$-matrix $Q^{n}=(q^{n}_{\pi\rho})_{\pi,\rho\in\pset{[n]}}$ given by
  \begin{align}
  q^{n}_{\pi\rho} \coloneqq \begin{cases}
   \lambda_{\card{\pi}, \card{\pi}-\card{\rho}+1} & \text{if } \pi\prec\rho,\\
   -\lambda_{\card{\pi}} & \text{if }\pi=\rho,\\
   0 & \text{otherwise,}
  \end{cases}
  \end{align}
  where $\lambda_b\coloneqq\sum_{k=2}^b \binom{b}{k}\lambda_{b,k},$ $b\geq 2,$ is the infinitesimal rate.
There are several extensions of $\leq$ to a linear order on $\pset{[n]}$. Let us fix such an extension $\leq_{\text{ex}}$ and notice that the following quantities of $Q^n$ that we are interested in do not depend on the specific extension chosen. The linear order $\leq_{\text{ex}}$ induces a natural bijection $\psi$ from $\pset{[n]}$ to $[B_n],$ where $B_n$ denotes the $n$th Bell number, which is the number $\card{\pset{[n]}}$ of partitions of $[n],$ defined inductively by letting $\psi(\Delta_{[n]})=1$ and $\psi(\pi)\leq\psi(\rho)$ iff $\pi\leq_{\text{ex}}\rho$. Then $Q^n$ can be seen as an upper right triangular matrix with entries ordered according to $\leq_{\text{ex}},$ if we define row/column $\pi$ to be lower than row/column $\rho$ iff $\pi\leq_{\text{ex}}\rho$. The determinant of $Q^n$ is therefore given by the product of its diagonal entries. Hence, the characteristic polynomial of $Q^{n}$ is given by
  \begin{align*}
    \chi_{Q^n}(x) & \coloneqq \det (Q^n-x\mathbb{I}_n) = (-1)^{B_n}\prod_{\pi\in\psetn} (\lambda_{\card{\pi}}+x)
    = (-1)^{B_n}\prod_{i=1}^n (\lambda_i+x)^{{n\brace i}},
  \end{align*}
  where $\mathbb{I}_n$ is the identity matrix on $\psetn$. Hence, for each $i\in [n],$ $-\lambda_i$ is an eigenvalue of $Q^{n}$ with algebraic multiplicity $\card{\mathcal{P}_{[n], i}} = {n\brace i}$. Since $Q^n$ is a $Q$-matrix, $(1, \ldots, 1)^{\top}\in\mathbb{R}^{B_n}$ is an eigenvector corresponding to the eigenvalue $0,$ and direct inspection yields that $(1, 0, \ldots, 0)^{\top}\in\mathbb{R}^{B_n}$ is also an eigenvector corresponding to the eigenvalue $-\lambda_n$.
  From now on we fix an $n\in\mathbb{N},$ $n\geq 2,$ and drop this index in the notation, if there is no risk of confusion.
\subsection{Bolthausen-Sznitman $n$-coalescent}\label{subsec:bs-coalescent}
In order to prepare the proof of the spectral decomposition of $Q=Q^{n, BS},$ Theorem \ref{thm:bs_spectral_decomposition}, we calculate the left and right eigenvectors of $Q$. In the sequel we give two proofs for the right eigenvectors of $Q$ that are of rather different flavours. The first proof is completeley self-contained and only makes use of the partition lattice $\pset{[n]}$. Together with the proof of Lemma \ref{lemma:kingman_right_eigenvector} it might serve as a starting point to find a spectral decomposition for more general coalescents, e.g.~beta coalescents. There is a probabilistic interpretation of the right eigenvector of $Q$ in terms of random recursive trees which then motivates our second proof that heavily draws on random recursive trees and their connection to the Bolthausen-Sznitman coalescent as explored in Goldschmidt and Martin \cite{EJP265}.

\begin{lemma}\label{lemma:bs_right_eigenvector}
For $\rho\in\pset{[n]}$ the vector $(r_{\pi\rho})_{\pi\in\pset{[n]}}$ defined by
 \begin{align}
   r_{\pi\rho}\coloneqq\begin{cases}
   \frac{(\card{\rho}-1)!}{(\card{\pi}-1)!}\prod_{B\in\rho}(\card{\res{\pi}{B}}-1)! & \text{if } \pi\leq\rho,\\
   0 & \text{otherwise,}
   \end{cases}
 \end{align}
is a right eigenvector of $Q$ with corresponding eigenvalue $1-\card{\rho}.$
\end{lemma}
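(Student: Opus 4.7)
The plan is to verify the eigenvalue identity $(QR)_{\pi\rho} = (1-\card{\rho})\,r_{\pi\rho}$ directly from the definitions, proceeding by cases on whether $\pi\leq\rho$. If $\pi\not\leq\rho$, then $r_{\pi\rho}=0$; moreover, any $\sigma$ with $q_{\pi\sigma}\neq 0$ satisfies $\pi\leq\sigma$, so if additionally $r_{\sigma\rho}\neq 0$ then $\pi\leq\sigma\leq\rho$, contradicting $\pi\not\leq\rho$. Hence both sides vanish and this case is immediate.

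In the main case $\pi\leq\rho$, I would absorb the diagonal contribution $q_{\pi\pi}r_{\pi\rho}=-(\card{\pi}-1)r_{\pi\rho}$ and reduce the claim to
\[
\sum_{\sigma:\;\pi\prec\sigma\leq\rho} q_{\pi\sigma}\,r_{\sigma\rho} \;=\; (\card{\pi}-\card{\rho})\,r_{\pi\rho}.
\]
The natural parametrisation uses the lattice isomorphism $[\pi,\rho]\cong\bigtimes_{B\in\rho}\pset{\res{\pi}{B}}$: each $\sigma$ in the sum corresponds uniquely to a choice of a distinguished block $B\in\rho$ together with a merger of some $k\geq 2$ blocks of $\res{\pi}{B}$ into a single block, the restrictions to the other blocks of $\rho$ being unchanged. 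Writing $b_B\coloneqq\card{\res{\pi}{B}}$, there are $\binom{b_B}{k}$ such $\sigma$ for each pair $(B,k)$, and the new partition satisfies $\card{\sigma}=\card{\pi}-k+1$ with $\card{\res{\sigma}{B}}=b_B-k+1$.

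Substituting the explicit formulas for $q_{\pi\sigma}=\tfrac{(k-2)!(\card{\pi}-k)!}{(\card{\pi}-1)!}$ and $r_{\sigma\rho}$, most factorials cancel against the factor $(\card{\pi}-k)!$ in the denominator of $r_{\sigma\rho}$, and the combinatorial factor $\binom{b_B}{k}(k-2)!(b_B-k)!$ collapses to $b_B!/(k(k-1))$. The inner sum that is left to evaluate is $\sum_{k=2}^{b_B}\tfrac{1}{k(k-1)}$, which telescopes via $\tfrac{1}{k(k-1)}=\tfrac{1}{k-1}-\tfrac{1}{k}$ to $\tfrac{b_B-1}{b_B}$; after pulling $r_{\pi\rho}$ out of the product the outer sum becomes $r_{\pi\rho}\sum_{B\in\rho}(b_B-1)=(\card{\pi}-\card{\rho})\,r_{\pi\rho}$, as required.

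The main obstacle is purely bookkeeping: lining up the factorials so that the cancellations are visible. The conceptual ingredient is the telescoping identity, which is what converts the beta-integral structure of the Bolthausen-Sznitman transition rates $\lambda_{b,k}=(k-2)!(b-k)!/(b-1)!$ into the clean linear factor $\card{\pi}-\card{\rho}$ that must appear opposite the eigenvalue $1-\card{\rho}$.
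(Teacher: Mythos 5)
Your proposal is correct and follows essentially the same route as the paper's first proof: the same parametrisation of $\{\sigma\colon\pi\prec\sigma\leq\rho\}$ by a block $B\in\rho$ and a $k$-fold merger inside $\res{\pi}{B}$, the same factorial cancellations, and the same telescoping evaluation of $\sum_{k=2}^{b_B}\frac{1}{k(k-1)}$ yielding $\sum_{B\in\rho}(b_B-1)=\card{\pi}-\card{\rho}$. (The paper additionally offers a second, probabilistic proof via random recursive trees, which your argument does not touch, but that is not needed.)
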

\begin{proof}(first proof of Lemma \ref{lemma:bs_right_eigenvector})
In order to carry out the following calculations, for any two partitions $\pi, \rho\in\pset{[n]}$ we need to parameterize the set $\{\sigma\colon \pi\prec\sigma\leq\rho\}.$ To construct an arbitrary partition $\sigma$ such that $\pi\prec\sigma,$ we could choose a subset $C\subseteq\pi$ of at least two blocks of $\pi$ and merge them in order to obtain $\sigma$. In this case $\card{\sigma}=\card{\pi}-\card{C}+1$. If, additionally, we require $\sigma\leq\rho,$ we certainly cannot choose any collection $C$ of blocks in $\pi.$ Instead, all blocks chosen have to be in $\res{\pi}{B}$ for some block $B\in\rho,$ in which case $\card{\res{\sigma}{B}}=\card{\res{\pi}{B}}-\card{C}+1$. To summarize, we have
\begin{align*}
  \{\sigma\colon \pi\prec\sigma\leq\rho\} &= \left\{\left\{\bigcup_{D\in C} D\right\} \cup\left(\pi\setminus C\right)\colon B\in\rho, C\subseteq\res{\pi}{B}, \card{C}\geq 2\right\}.
\end{align*}
Using this parametrization, we obtain
\begin{align*}
  &\sum_{\sigma\in\pset{[n]}} q_{\pi\sigma}r_{\sigma\rho} \\
  &= \sum_{\sigma\colon \pi\prec\sigma\leq\rho} q_{\pi\sigma}r_{\sigma\rho} - (\card{\pi}-1)r_{\pi\rho}\\
  &= \sum_{B\in\rho} \sum_{\substack{C\subseteq\res{\pi}{B}\\\card{C}\geq 2}} \frac{(\card{\pi}-\card{C})!(\card{C}-2)!}{(\card{\pi}-1)!}\frac{(\card{\rho}-1)!}{(\card{\pi}-\card{C})!}\\
    &\qquad\times (\card{\res{\pi}{B}}-\card{C})!\prod_{D\in\rho\setminus \{B\}}(\card{\res{\pi}{D}}-1)!- (\card{\pi}-1)r_{\pi\rho}\\
  &= \sum_{B\in\rho} \sum_{c=2}^{\card{\res{\pi}{B}}}\binom{\card{\res{\pi}{B}}}{c}\frac{(c-2)!}{(\card{\pi}-1)!}(\card{\rho}-1)!\frac{(\card{\res{\pi}{B}}-c)!}{(\card{\res{\pi}{B}}-1)!}\prod_{D\in\rho} (\card{\res{\pi}{D}}-1)! - (\card{\pi}-1)r_{\pi\rho}\\
  &= \left(\sum_{B\in\rho} \sum_{c=2}^{\card{\res{\pi}{B}}}\binom{\card{\res{\pi}{B}}}{c} (c-2)!\frac{(\card{\res{\pi}{B}}-c)!}{(\card{\res{\pi}{B}}-1)!} - (\card{\pi}-1)\right) r_{\pi\rho}\\
  &= \left(\sum_{B\in\rho} \card{\res{\pi}{B}}\sum_{c=2}^{\card{\res{\pi}{B}}} \frac{1}{c(c-1)}  - (\card{\pi}-1)\right) r_{\pi\rho}\\
  &= \left(\sum_{B\in\rho} (\card{\res{\pi}{B}}-1)  - (\card{\pi}-1)\right) r_{\pi\rho}= (1-\card{\rho})r_{\pi\rho},
\end{align*}
and the claim follows.
\end{proof}

{\bf A connection with random recursive trees.} Let us now recall the notion of a random recursive tree in order to prepare our second proof of Lemma \ref{lemma:bs_right_eigenvector}, where we closely follow Goldschmidt and Martin \cite{EJP265}. We call a tree on $n$ nodes labelled by $1, 2, \ldots, n$ an increasing tree if the root has label $1$ and the labels in any path from the root to another node are increasing. Figure \ref{fig:increasing_trees} shows all $6$ increasing trees on $\{\{1\}, \{2\}, \{3\}, \{4\}\}.$
  \begin{figure}[ht]
    \begin{center}
          \begin{tikzpicture}[scale=0.75, auto]
            \node[draw, fill=black, circle, minimum size=5pt, inner sep=0pt, label=below:$\{1\}$] () at (0, 0) {};
            \node[draw, fill=black, circle, minimum size=5pt, inner sep=0pt, label=right:$\{2\}$] () at (0, 1) {};
            \node[draw, fill=black, circle, minimum size=5pt, inner sep=0pt, label=right:$\{3\}$] () at (0, 2) {};
            \node[draw, fill=black, circle, minimum size=5pt, inner sep=0pt, label=above:$\{4\}$] () at (0, 3) {};
            \draw (0, 0) -- (0, 1) -- (0, 2) -- (0, 3);
          \end{tikzpicture}
          \begin{tikzpicture}[scale=0.75, auto]
            \node[draw, fill=black, circle, minimum size=5pt, inner sep=0pt, label=below:$\{1\}$] () at (0, 0) {};
            \node[draw, fill=black, circle, minimum size=5pt, inner sep=0pt, label=right:$\{2\}$] () at (0, 1) {};
            \node[draw, fill=black, circle, minimum size=5pt, inner sep=0pt, label=above:$\{3\}$] () at (-1, 2) {};
            \node[draw, fill=black, circle, minimum size=5pt, inner sep=0pt, label=above:$\{4\}$] () at (1, 2) {};
            \draw (0, 0) -- (0, 1) -- (-1, 2);
            \draw (0,1) -- (1, 2);
          \end{tikzpicture}
            \begin{tikzpicture}[scale=0.75, auto]
              \node[draw, fill=black, circle, minimum size=5pt, inner sep=0pt, label=below:$\{1\}$] () at (0, 0) {};
              \node[draw, fill=black, circle, minimum size=5pt, inner sep=0pt, label=right:$\{2\}$] () at (-1, 1) {};
              \node[draw, fill=black, circle, minimum size=5pt, inner sep=0pt, label=above:$\{3\}$] () at (-1, 2) {};
              \node[draw, fill=black, circle, minimum size=5pt, inner sep=0pt, label=above:$\{4\}$] () at (1, 1) {};
              \draw (1, 1) -- (0, 0) -- (-1, 1) -- (-1, 2);
            \end{tikzpicture}  
      \begin{tikzpicture}[scale=0.75, auto]
        \node[draw, fill=black, circle, minimum size=5pt, inner sep=0pt, label=below:$\{1\}$] () at (0, 0) {};
        \node[draw, fill=black, circle, minimum size=5pt, inner sep=0pt, label=right:$\{2\}$] () at (-1, 1) {};
        \node[draw, fill=black, circle, minimum size=5pt, inner sep=0pt, label=above:$\{4\}$] () at (-1, 2) {};
        \node[draw, fill=black, circle, minimum size=5pt, inner sep=0pt, label=above:$\{3\}$] () at (1, 1) {};
        \draw (1, 1) -- (0, 0) -- (-1, 1) -- (-1, 2);
      \end{tikzpicture}
      \begin{tikzpicture}[scale=0.75, auto]
              \node[draw, fill=black, circle, minimum size=5pt, inner sep=0pt, label=below:$\{1\}$] () at (0, 0) {};
              \node[draw, fill=black, circle, minimum size=5pt, inner sep=0pt, label=above:$\{2\}$] () at (-1, 1) {};
              \node[draw, fill=black, circle, minimum size=5pt, inner sep=0pt, label=above:$\{4\}$] () at (1, 2) {};
              \node[draw, fill=black, circle, minimum size=5pt, inner sep=0pt, label=right:$\{3\}$] () at (1, 1) {};
              \draw (1, 2) -- (1, 1) -- (0, 0) -- (-1, 1);
      \end{tikzpicture}
      \begin{tikzpicture}[scale=0.75, auto]
              \node[draw, fill=black, circle, minimum size=5pt, inner sep=0pt, label=below:$\{1\}$] () at (0, 0) {};
              \node[draw, fill=black, circle, minimum size=5pt, inner sep=0pt, label=above:$\{2\}$] () at (-1, 1) {};
              \node[draw, fill=black, circle, minimum size=5pt, inner sep=0pt, label=above:$\{3\}$] () at (0, 1) {};
              \node[draw, fill=black, circle, minimum size=5pt, inner sep=0pt, label=above:$\{4\}$] () at (1, 1) {};
              \draw (1, 1) -- (0, 0) -- (-1, 1);
              \draw (0, 0) -- (0, 1);
      \end{tikzpicture}
    \end{center}
    \caption{All $6$ increasing trees on $\{\{1\}, \{2\}, \{3\}, \{4\}\}.$ Only the third and the sixth tree contain $\{\{1, 2, 3\}, \{4\}\}.$}
    \label{fig:increasing_trees}
  \end{figure}
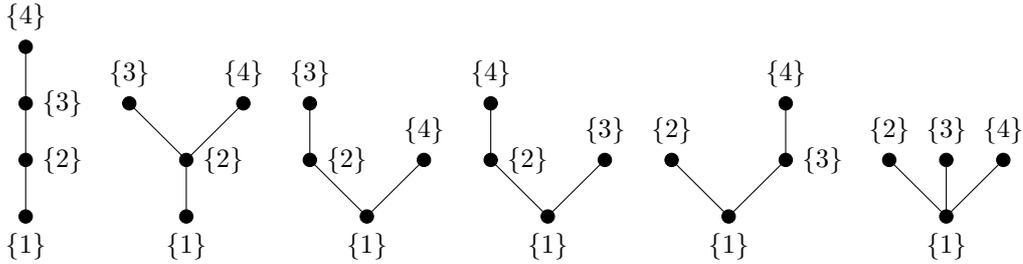
  If we have an increasing tree on $n-1$ nodes, we obtain an increasing tree on $n$ nodes by adding a node labelled $n$ and attaching it by an edge to one of the nodes in the given tree. Starting from the tree that only consists of the root node $1$ this gives an explicit construction of all increasing trees on $n$ nodes. Consequently, there are $(n-1)!$ increasing trees on $n$ nodes. A random recursive tree is a tree chosen uniformly at random from all increasing trees on $n$ nodes. An explicit construction of a random recursive tree on $n$ nodes is the following. Start with the root node labelled $1$. If the tree has $k$ nodes, choose one of these nodes uniformly at random and attach to it node $k+1$ by an edge. Stop after attaching node $n$.

For any partition $\pi\in\pset{[n]}$ an increasing tree on $\pi$ is a tree with $\card\pi$ nodes that are labelled by the blocks in $\pi$ such that the labels in any path from the root to another node are increasing with respect to their least element. We denote a random recursive tree on $\pi$ by $\mathfrak T_\pi.$

Crucial to the construction of the Bolthausen-Sznitman coalescent via random recursive trees is the following cutting procedure. When given a tree $\mathcal T$ on $\pi,$ a cut is performed by picking an edge, removing the subtree above this edge (here we picture trees as they grow in nature: from the root at the bottom to the leaves at the top) and adding the labels of this subtree to the labels of the node below the edge. For a simple tree the cutting procedure is depicted in Figure \ref{fig:cutting_trees}.
  \begin{figure}[ht]
    \begin{center}
      \begin{tikzpicture}[scale=0.75, auto]
        \node[draw, fill=black, circle, minimum size=5pt, inner sep=0pt, label=below:$\{1\}$] () at (0, 0) {};
        \node[draw, fill=black, circle, minimum size=5pt, inner sep=0pt, label=right:$\{2\}$] () at (-1, 1) {};
        \node[draw, fill=black, circle, minimum size=5pt, inner sep=0pt, label=above:$\{4\}$] () at (-1, 2) {};
        \node[draw, fill=black, circle, minimum size=5pt, inner sep=0pt, label=right:$\{3\}$] () at (1, 1) {};
		\node[draw, cross out, minimum width=5pt, fill=red] () at (-0.5, 0.5) {};
        \draw (1, 1) -- (0, 0) -- (-1, 1) -- (-1, 2);
      \end{tikzpicture}
      \begin{tikzpicture}[scale=0.75, auto]
        \node[draw, fill=black, circle, minimum size=5pt, inner sep=0pt, label=below:$\{{1, 2, 4}\}$] () at (0, 0) {};
        \node[draw, fill=black, circle, minimum size=5pt, inner sep=0pt, label=right:$\{3\}$] () at (1, 1) {};
		\node[draw, cross out, minimum width=5pt, fill=red] () at (0.5, 0.5) {};
        \draw (1, 1) -- (0, 0);
      \end{tikzpicture}
      \begin{tikzpicture}[scale=0.75, auto]
        \node[draw, fill=black, circle, minimum size=5pt, inner sep=0pt, label=below:$\{{1, 2, 3, 4}\}$] () at (0, 0) {};
      \end{tikzpicture}
    \end{center}
    \caption{An increasing tree on $\{\{1\}, \{2\}, \{3\}, \{4\}\}$ cut down successively to a tree on one node $\{1, 2, 3, 4\}.$}
    \label{fig:cutting_trees}
  \end{figure}
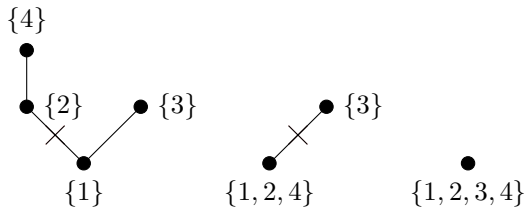
We denote by $c\mathcal T$ the tree obtained by cutting $\mathcal T$ at an edge chosen uniformly at random. A striking result then is Proposition 2.1 in Goldschmidt and Martin \cite{EJP265}, which states that after cutting a random recursive tree $\mathfrak T_\pi$ on $\pi$ at an edge chosen uniformly at random, the new tree $c\mathfrak T_\pi$ is a random recursive tree on the new label set (which is again a partition of $[n]$).

For $\pi\in\pset{[n]}$ define a time-homogeneous continuous-time Markov chain $\mathcal R_\pi \coloneqq \{ \mathcal R_\pi(t), t\geq 0 \}$ with values in the set of random recursive trees as follows. The initial state $\mathcal R_\pi(0)$ is the random recursive tree $\mathfrak T_\pi$ on $\pi.$ The process $\mathcal R_\pi$ evolves according to the following dynamics. Suppose $\mathcal R_\pi$ is in state $\mathcal T.$ If $\mathcal T$ has only one vertex, do nothing. Otherwise, attach to each edge in $\mathcal T$ an exponential $1$ clock, all clocks being independent. When the first clock rings, cut $\mathcal T$ at the associated edge to obtain the next state of $\mathcal R_\pi.$ For any tree $\mathcal T$ on the label set $\pi$ let $p(\mathcal T)=\pi.$ Proposition 2.2 of Goldschmidt and Martin \cite{EJP265} establishes that the Bolthausen-Sznitman $n$-coalescent $\Pi$ is equal in distribution to
\[ \{ p(\mathcal R_{\Delta_n} (t)), t\geq 0  \}. \]

Fix two partitions $\pi, \rho\in\pset{[n]}$ such that $\pi\leq\rho$. We say that an increasing tree $\mathcal T$ on $\pi$ contains $\rho$ iff one can obtain an increasing tree on $\rho$ by successively cutting $\mathcal T.$ If $\mathcal T$ contains $\rho$ we write $\rho\sqsubset \mathcal T$. Let $\{R_\pi(k), k\geq 0\}$ denote the jump chain of $\mathcal R_\pi$. From the definition of $\mathcal R$ it follows that a process that is equal in distribution to the jump chain $R$ can be constructed recursively by letting
\[R_\pi(0) \coloneqq \mathfrak T_\pi, \quad R_\pi(k+1)\coloneqq cR_\pi(k), \quad k\geq 0.\]
A simple question then is: what is the probability $\Prob{\rho\sqsubset \mathfrak T_\pi}$ that a random recursive tree on $\pi$ contains $\rho$? Notice first, that an increasing tree $\mathcal T$ on $\pi$ contains $\rho$ if for each block $B\in\rho$ we can find a node $v=v(B)$ in $\mathcal T$ such that the labels in the subtree above $v$ coincide with the elements in $\res{\pi}{B}$. In other words, we can construct all increasing trees on $\pi$ that contain $\rho$ by first constructing an increasing tree $\mathcal T$ on $\rho.$ Then each node $v\in\mathcal T$ is labelled by some block $B\in\rho$. Now if $\card{\res{\pi}{B}}>1,$ build an increasing tree on $\res{\pi}{B}$ and replace $v$ by this tree. This procedure is done for all nodes $v\in\mathcal T$ to obtain an increasing tree on $\pi$ that contains $\rho.$ Therefore, the number of increasing trees on $\pi$ containing $\rho$ is
\begin{align*}
  \card \{\mathcal T\colon \mathcal T\text{ increasing tree on }\pi, \rho\sqsubset \mathcal T\} = (\card\rho-1)!\prod_{B\in\rho}(\card\res{\pi}{B}-1)!.
\end{align*}
On the other hand, the total number of increasing trees on $\pi$ is $(\card\pi-1)!$. Since a random recursive tree on $\pi$ is a tree chosen uniformly at random from all increasing trees on $\pi,$ we obtain
\begin{align}
\Prob{\rho\sqsubset \mathfrak T_\pi} = \begin{cases}
\frac{(\card\rho-1)!\prod_{B\in\rho}(\card\res{\pi}{B}-1)!}{(\card\pi-1)!} & \text{if }\pi\leq\rho,\\
0 & \text{otherwise},
\end{cases}
\end{align}
which is just $r_{\pi\rho}$. We can now turn to our second proof of Lemma \ref{lemma:bs_right_eigenvector} in terms of random recursive trees.
\begin{proof}\label{proof:second_proof}(second proof of Lemma 1)
We rewrite the statement as
\begin{align}\label{eq:statement_rewritten}
  \sum_{\sigma\colon \pi\prec\sigma} q_{\pi\sigma}r_{\sigma\rho} &= (\card\pi-\card\rho)r_{\pi\rho}.
\end{align}
Let $J=\{J(k), k\geq 0\}$ denote the jump chain of $\Pi=\Pi^{BS, n}$. Since $\Prob{J(1)=\sigma|J(0)=\pi}=q_{\pi\sigma}/q_\pi$ is the probability that $J$ jumps from $\pi$ to $\sigma,$ after dividing \eqref{eq:statement_rewritten} by $q_\pi=\card\pi-1$ we obtain for the left hand side
\begin{align*}
  \sum_{\sigma\colon \pi\prec\sigma} \frac{q_{\pi\sigma}}{q_\pi}r_{\sigma\rho} & = \sum_{\sigma\colon \pi\prec\sigma} \Prob{J(1)=\sigma|J(0)=\pi}\Prob{\rho\sqsubset \mathfrak T_\sigma}\\
  &= \sum_{\sigma\colon \pi\prec\sigma} \Prob{p(c\mathfrak T_\pi)=\sigma}\Prob{\rho\sqsubset \mathfrak T_\sigma}\\
  &= \Prob{\rho\sqsubset \mathfrak T_{p(c\mathfrak T_\pi)}} \\
  &= \Prob{\rho\sqsubset c\mathfrak T_\pi} \\
  &= \Prob{\rho\sqsubset c\mathfrak T_\pi|\rho\sqsubset \mathfrak T_\pi}\Prob{\rho\sqsubset \mathfrak T_\pi}, \\
\end{align*}
where we used that for any random recursive tree $\mathcal T$ one has $\mathfrak T_{p(\mathcal T)}=_d\mathcal T$.
Conditional on $\rho\sqsubset \mathfrak T_\pi,$ there are precisely $\card\pi-\card\rho$ among the $\card\pi-1$ edges in $\mathfrak T_\pi$ that we may cut in order to obtain a tree $c\mathfrak T_\pi$ that contains $\rho.$ Since, by definition, $c\mathfrak T_\pi$ is obtained by cutting $\mathfrak T_\pi$ at an edge chosen uniformly at random, we have
\begin{align*}
  \Prob{\rho\sqsubset c\mathfrak T_\pi|\rho\sqsubset \mathfrak T_\pi}=\frac{\card{\pi-\card{\rho}}}{\card \pi-1}.
\end{align*}
The claim follows.
\end{proof}

\begin{lemma}\label{lem:technical_lemma}
For $x\in\mathbb{R}\setminus\{0\}$ we have
\begin{align}
	\sum_{\sigma\in\pset{[n]}} r_{\pi\sigma} x^{\card{\sigma}-1}l_{\sigma\rho} &= (-1)^{\card{\rho}}x^{-1}\frac{(\card{\rho}-1)!}{(\card{\pi}-1)!} \prod_{B\in\rho}(-x)^{\overline{\card{\res{\pi}{B}}}}.
\end{align}
\end{lemma}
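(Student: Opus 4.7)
The plan is to substitute the formulas for $r_{\pi\sigma}$ and $l_{\sigma\rho}$ into the sum, note that the summand vanishes unless $\sigma\in[\pi,\rho]$, and then factor the sum over the interval $[\pi,\rho]$ using the poset isomorphism $[\pi,\rho]\cong\bigtimes_{B\in\rho}\pset{\res{\pi}{B}}$ recalled in the preliminaries. The prefactor $\frac{(\card{\rho}-1)!}{(\card{\pi}-1)!}$ and the sign $(-1)^{-\card{\rho}}$ come out of the sum immediately, so the heart of the argument is to evaluate
\[
S(\pi,\rho)\coloneqq\sum_{\sigma\in[\pi,\rho]}(-1)^{\card{\sigma}}x^{\card{\sigma}-1}\prod_{B'\in\sigma}(\card{\res{\pi}{B'}}-1)!.
\]

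Under the isomorphism a partition $\sigma\in[\pi,\rho]$ is encoded by a tuple $(\sigma_B)_{B\in\rho}$ with $\sigma_B\in\pset{\res{\pi}{B}}$, and the blocks $B'\in\sigma$ correspond bijectively to the blocks of the $\sigma_B$; moreover $\card{\res{\pi}{B'}}$ equals the size of the corresponding block of $\sigma_B$ (viewed as a set of elements of $\res{\pi}{B}$). Pulling out one factor of $x^{-1}$ and using $\card{\sigma}=\sum_{B\in\rho}\card{\sigma_B}$, this lets me write
\[
S(\pi,\rho)=x^{-1}\prod_{B\in\rho}\sum_{\tau\in\pset{\res{\pi}{B}}}(-x)^{\card{\tau}}\prod_{B'\in\tau}(\card{B'}-1)!.
\]

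The key step — and in some sense the only real content — is the classical identity
\[
\sum_{\tau\in\pset{[m]}}y^{\card{\tau}}\prod_{B\in\tau}(\card{B}-1)!=\sum_{k=1}^{m}{m\brack k}y^{k}=y^{\overline{m}},
\]
valid for every integer $m\geq 1$. The first equality follows from the combinatorial definition of the unsigned Stirling numbers of the first kind (grouping partitions of $[m]$ by number of blocks and noting that $\prod_{B\in\tau}(\card{B}-1)!$ counts the ways of turning $\tau$ into a permutation whose cycle partition is $\tau$), and the second is the standard generating identity ${m\brack k}=[y^{k}]\,y^{\overline{m}}$. Applying this with $m=\card{\res{\pi}{B}}$ and $y=-x$ gives
\[
S(\pi,\rho)=x^{-1}\prod_{B\in\rho}(-x)^{\overline{\card{\res{\pi}{B}}}}.
\]

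Multiplying by the prefactor $(-1)^{-\card{\rho}}\frac{(\card{\rho}-1)!}{(\card{\pi}-1)!}=(-1)^{\card{\rho}}\frac{(\card{\rho}-1)!}{(\card{\pi}-1)!}$ produces precisely the right-hand side of the lemma, completing the proof. The main (very mild) obstacle is simply keeping the indexing under control when the interval isomorphism is invoked — i.e.\ checking that blocks of $\sigma$ correspond to blocks of the $\sigma_B$ with matching cardinalities of restrictions; once that bookkeeping is done, the Stirling-number identity finishes the job.
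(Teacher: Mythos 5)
Your proof is correct and follows essentially the same route as the paper: substitute the formulas for $r_{\pi\sigma}$ and $l_{\sigma\rho}$, restrict the sum to $\sigma\in[\pi,\rho]$, factor it over the blocks of $\rho$ via the isomorphism $[\pi,\rho]\cong\bigtimes_{B\in\rho}\pset{\res{\pi}{B}}$, and finish with the identities $\sum_{\tau\in\pset{[m],k}}\prod_{C\in\tau}(\card{C}-1)!={m\brack k}$ and $y^{\overline{m}}=\sum_{k}{m\brack k}y^{k}$. The bookkeeping you flag (blocks of $\sigma$ corresponding to blocks of the $\sigma_B$, with matching cardinalities of the restrictions) is handled correctly, so nothing is missing.
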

\begin{proof}
Notice that
  \begin{align}\label{eq:bs_transition_matrix}
\sum_{\sigma\in\pset{[n]}} r_{\pi\sigma} x^{\card{\sigma}-1}l_{\sigma\rho}\notag
    &= \frac{(\card{\rho}-1)!}{(\card{\pi}-1)!}\sum_{\sigma\in [\pi,\rho]} (-1)^{\card{\sigma}-\card{\rho}} x^{\card{\sigma}-1}\prod_{B\in\sigma}(\card{\res{\pi}{B}}-1)!\notag\\
         &=(-1)^{\card{\rho}}x^{-1}\frac{(\card{\rho}-1)!}{(\card{\pi}-1)!}  \sum_{\sigma\in [\pi,\rho]} \prod_{B\in\sigma}-x(\card{\res{\pi}{B}}-1)!.
    \end{align}
From the interpretation of ${n\brack i}$ as counting the number of permutations of $[n]$ with $i$ cycles it is clear that ${n\brack i}=\sum_{\pi\in\pset{[n], i}}\prod_{B\in\pi}(\card{B}-1)!,$ cf.~equation (1.15) in \cite{MR2245368}. Using the isomorphism $[\pi, \rho]\cong \bigtimes_{B\in\rho} \pset{\res{\pi}{B}},$ we calculate
    \begin{align} 
    \sum_{\sigma\in [\pi,\rho]} \prod_{B\in\sigma}-x(\card{\res{\pi}{B}}-1)!
    &= \sum_{\tau'\in\bigtimes_{B\in\rho}\pset{\res{\pi}{B}}} \prod_{B\in\rho}\prod_{C\in\tau'_{B}}-x(\card{C}-1)!\notag\\
    &= \prod_{B\in\rho}\sum_{\tau\in\pset{\res{\pi}{B}}} \prod_{C\in\tau}-x(\card{C}-1)!\notag\\
    &= \prod_{B\in\rho}\left(\sum_{k=1}^{\card{\res{\pi}{B}}}(-x)^k\sum_{\tau\in\pset{\res{\pi}{B}, k}} \prod_{C\in\tau}(\card{C}-1)!\right)\notag\\
    &= \prod_{B\in\rho}\sum_{k=1}^{\card{\res{\pi}{B}}}(-x)^k {\card{\res{\pi}{B}}\brack k}\label{eq:bolthausen_term1}\\
    &= \prod_{B\in\rho}(-x)^{\overline{\card{\res{\pi}{B}}}},
  \end{align}
  where in the last step we used $x^{\overline{n}} = \sum_{k=1}^n {n\brack k}x^k,$ cf.~equation (1.16) in~\cite{MR2245368}.
\end{proof}
  
\begin{lemma}\label{lemma:bs_left_eigenmatrix}
   The matrix $L=(l_{\pi\rho})_{\pi,\rho\in\pset{[n]}}$ defined by
   \begin{align}
     l_{\pi\rho}\coloneqq\begin{cases}
     (-1)^{\card{\pi}-\card{\rho}}\frac{(\card{\rho}-1)!}{(\card{\pi}-1)!} & \text{if } \pi\leq\rho,\\
     0 & \text{otherwise,}
     \end{cases}
   \end{align}
   is the inverse matrix of $R,$ i.e.~$\sum_{\sigma\in\pset{[n]}} r_{\pi\sigma}l_{\sigma\rho}=\delta_{\pi\rho}$.
\end{lemma}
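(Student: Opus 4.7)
The strategy is to reduce the identity directly to Lemma~\ref{lem:technical_lemma} specialized at $x = 1$. First note that $r_{\pi\sigma}l_{\sigma\rho}$ vanishes unless $\pi \le \sigma \le \rho$, so the sum in question is supported on the interval $[\pi,\rho]$ of the partition lattice. In particular, if $\pi \not\le \rho$ then $[\pi,\rho] = \emptyset$ and both sides equal $0 = \delta_{\pi\rho}$, so it suffices to treat the case $\pi \le \rho$.

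Assume $\pi \le \rho$. Setting $x = 1$ in Lemma~\ref{lem:technical_lemma} gives
\begin{align*}
  \sum_{\sigma\in\pset{[n]}} r_{\pi\sigma} l_{\sigma\rho}
  &= (-1)^{\card{\rho}}\frac{(\card{\rho}-1)!}{(\card{\pi}-1)!}\prod_{B\in\rho}(-1)^{\overline{\card{\res{\pi}{B}}}}.
\end{align*}
The key observation is that for any integer $k\ge 1$ one has $(-1)^{\overline{k}} = (-1)\cdot 0\cdot 1\cdots (k-2)$, so $(-1)^{\overline{1}} = -1$ while $(-1)^{\overline{k}} = 0$ for every $k\ge 2$. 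Hence the product over $B\in\rho$ vanishes unless $\card{\res{\pi}{B}} = 1$ for every $B \in \rho$, a condition which is equivalent to $\pi = \rho$.

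When $\pi = \rho$ the product reduces to $(-1)^{\card{\rho}}$, and the prefactor $(\card\rho-1)!/(\card\pi-1)!$ equals $1$, giving $(-1)^{\card\rho}\cdot (-1)^{\card\rho} = 1$. Combining the two cases yields $\sum_\sigma r_{\pi\sigma}l_{\sigma\rho} = \delta_{\pi\rho}$, as required. The only conceptual step is recognizing that Lemma~\ref{lem:technical_lemma} at $x=1$ encodes exactly the desired inner product; the rest is the trivial vanishing of $(-1)^{\overline{k}}$ for $k\ge 2$, so no genuine obstacle arises once the previous lemma is in hand.
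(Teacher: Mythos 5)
Your proposal is correct and takes essentially the same route as the paper, whose entire proof is to set $x=1$ in Lemma~\ref{lem:technical_lemma}. Your additional verification that $(-1)^{\overline{k}}$ vanishes for $k\ge 2$, forcing $\pi=\rho$ and yielding $(-1)^{\card{\rho}}\cdot(-1)^{\card{\rho}}=1$, simply spells out the evaluation the paper leaves implicit.
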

\begin{proof}
  Choosing $x=1$ in Lemma \ref{lem:technical_lemma} we have that $\sum_{\sigma\in\pset{[n]}} r_{\pi\sigma} l_{\sigma\rho} = \delta_{\pi\rho}.$
\end{proof}
\begin{proof}(of Theorem \ref{thm:bs_spectral_decomposition}) The claim follows by Lemmata \ref{lemma:bs_right_eigenvector} and \ref{lemma:bs_left_eigenmatrix}.
\end{proof}

\begin{proof}(of Corollary \ref{cor:bs_transition_probabilities})
  Since $\Pi$ is a continuous-time Markov chain with finite state space, we have for $P(t)=(p_{\pi\rho}(t))_{\pi,\rho\in\pset{[n]}}$ the identity $P(t)=\exp(tRDL)=R\exp(tD)L.$ In the last step we made use of the spectral decomposition, Theorem \ref*{thm:bs_spectral_decomposition}. In particular, this yields
  \begin{align}\label{eq:bs_transition_probabilities}
  p_{\pi\rho}(t) = \sum_{\sigma\in\pset{[n]}} r_{\pi\sigma}e^{-t(\card{\sigma}-1)}l_{\sigma\rho} =\sum_{\sigma\in [\pi,\rho]} r_{\pi\sigma}e^{-t(\card{\sigma}-1)}l_{\sigma\rho}.
  \end{align}
Letting $x=e^{-t}$ in Lemma \ref{lem:technical_lemma} proves the claim.
\end{proof}
\begin{proof}(of Corollary \ref{cor:bs_greens_matrix})
From \eqref{eq:bolthausen_term1} we have
\begin{align*}
  x^{-1}\sum_{\sigma\in [\pi,\rho]} \prod_{B\in\sigma}-x(\card{\res{\pi}{B}}-1)! &= x^{-1}\prod_{B\in\rho}\sum_{k=1}^{\card{\res{\pi}{B}}}(-x)^{k} {\card{\res{\pi}{B}}\brack k}\\
  &= \sum_{(k_B)_{B\in\rho}\in\mathbb{N}^{\card{\rho}}} (-1)^{\abs{k}}x^{\abs k-1}\prod_{B\in\rho}{\card{\res{\pi}{B}}\brack k_B},
\end{align*}
where $\abs k\coloneqq\sum_{B\in\rho}k_B.$ Letting $x=e^{-t}$ and integrating out with respect to $t$ we see for $\rho\neq\{[n]\}$
\begin{align*}
  \int_0^\infty e^{t}\sum_{\sigma\in [\pi,\rho]} \prod_{B\in\sigma}-e^{-t}(\card{\res{\pi}{B}}-1)!dt &= \sum_{(k_B)_{B\in\rho}\in\mathbb{N}^{\card{\rho}}}\frac{(-1)^{\abs k}}{\abs k-1}\prod_{B\in\rho}{\card{\res{\pi}{B}}\brack k_B}, 
\end{align*}
where for $\rho=\{[n]\}$
\begin{align*}
  \int_0^\infty e^{t}\sum_{\sigma\in [\pi,\rho]} \prod_{B\in\sigma}-e^{-t}(\card{\res{\pi}{B}}-1)!dt &= \sum_{k=1}^{\card \pi}(-1)^{k}\int_0^\infty e^{-t(k-1)}dt{\card{\pi}\brack k}= -\infty.
\end{align*}
The claim follows from \eqref{eq:bs_transition_matrix} and the definition of $g_{\pi\rho}$.
\end{proof}

\begin{proof}(of Corollary \ref{cor:bs_spectral_decomposition_block_counting_proces})
  Evidently, the rate at which a jump from $i$ to $j$ blocks occurs equals the sum of all rates at which a jump from a partition $\pi\in\pset{[n], i}$ to a partition $\rho\in\pset{[n], j}$ occurs, i.e.~$q'_{ij} = \sum_{\rho\in\pset{[n], j}} q_{\pi\rho},$ where $\pi\in\pset{{[n], i}}$ is fixed arbitrarily. The quantity $\sum_{\rho\in\pset{[n], j}} q_{\pi\rho}$ does not depend on the choice of $\pi\in\pset{[n], i},$ as the following calculation shows. By the spectral decomposition of $Q,$ Theorem \ref{thm:bs_spectral_decomposition}, we obtain
  
  \begin{align*}
    q'_{ij} &= \sum_{\rho\in\pset{[n], j}} \sum_{\sigma\in [\pi,\rho]} r_{\pi\sigma}d_{\sigma\sigma}l_{\sigma\rho}\\
    &= \sum_{\substack{\sigma\colon\pi\leq\sigma\\ \card{\sigma}\geq j}}\sum_{\substack{\rho\colon\sigma\leq\rho\\ \card{\rho}=j}} \frac{(\card{\sigma}-1)!}{(\card{\pi}-1)!}\prod_{B\in\sigma} (\card{B}-1)!(1-\card{\sigma})(-1)^{\card{\sigma}-j}\frac{(j-1)!}{(\card{\sigma}-1)!}\\
    &= \frac{(j-1)!}{(i-1)!}(-1)^j\sum_{\substack{\sigma\colon\pi\leq\sigma \\\card{\sigma}\geq j}}(\card{\sigma}-1)!\prod_{B\in\sigma} (\card{B}-1)!(1-\card{\sigma})\frac{(-1)^{\card{\sigma}}}{(\card{\sigma}-1)!}{\card{\sigma}\brace j}\\
    &= \frac{(j-1)!}{(i-1)!}(-1)^j\sum_{k=j}^i(k-1)!{i\brack k}(1-k)\frac{(-1)^k}{(k-1)!}{k\brace j}= \sum_{k=i}^j r'_{ik}d'_{kk}l'_{kj},
  \end{align*}
  hence, $Q'=R'D'L'.$
\end{proof}

\subsection{Kingman's $n$-coalescent}
For any two partitions $\pi,\rho\in\pset{[n]}$ such that $\pi\leq\rho$ one may ask in how many different ways the jump chain of Kingman's coalescent may reach $\rho$ when started in $\pi$. Since at each step only one merger of a pair of blocks occurs, there are $\card{\pi}-\card{\rho}+1$ steps to be taken, and so the set of different ways is $C(\pi, \rho)\coloneqq \{(\pi_1, \ldots, \pi_m)\colon \pi=\pi_1\lessdot\cdots\lessdot\pi_m=\rho, m=\card{\pi}-\card{\rho}+1\},$ where we defined $\lessdot$ in the paragraph preceding Theorem \ref{thm:kingman_spectral_decomposition}. We call each element in $C(\pi, \rho)$ a maximal chain in $[\pi, \rho]$ and denote by $m(\pi, \rho)\coloneqq\card{C(\pi,\rho)}$ the number of maximal chains in $[\pi, \rho]$. Before we turn to the proof of the spectral decomposition of $Q,$ Theorem \ref{thm:kingman_spectral_decomposition}, we count the number of maximal chains $m(\pi, \rho)$ in $[\pi, \rho]$ in the next Lemma.
\begin{lemma}[Number of maximal chains]\label{lemma:maximal_chains}
For $\pi, \rho\in\pset{[n]}$ with $\pi\leq\rho$ we have that
  \begin{align}\label{eq:maximal_chains}
    m(\pi, \rho) = 2^{\card{\rho}-\card{\pi}}(\card{\pi}-\card{\rho})!\prod_{B\in\rho}\card{\res{\pi}{B}}!.
  \end{align}
\end{lemma}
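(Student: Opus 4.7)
The plan is to reduce the count to the special case of the maximal interval $[\Delta_{[k]},\{[k]\}]$ by using the standard factorisation $[\pi,\rho]\cong\bigtimes_{B\in\rho}\pset{\res{\pi}{B}}$ recalled earlier in the paper. Under this isomorphism the smallest and largest elements of each factor $\pset{\res{\pi}{B}}$ are $\Delta_{\res{\pi}{B}}$ and $\{\res{\pi}{B}\}$, and a maximal chain in the product poset decomposes \emph{uniquely} into a maximal chain in each factor together with an interleaving of the individual merger steps into a linear order. The count then splits into two independent combinatorial sub-problems.

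First I would count the maximal chains in $\pset{[k]}$ from $\Delta_{[k]}$ to $\{[k]\}$. Such a chain has $k-1$ steps, and at the step passing from a partition with $j$ blocks to one with $j-1$ blocks we may pick any of the $\binom{j}{2}$ unordered pairs of blocks to merge, giving
\begin{align*}
\prod_{j=2}^{k}\binom{j}{2}=\prod_{j=2}^{k}\frac{j(j-1)}{2}=\frac{k!(k-1)!}{2^{k-1}}.
\end{align*}
Applied to $\pset{\res{\pi}{B}}$ with $k=\card{\res{\pi}{B}}$, this yields $\card{\res{\pi}{B}}!(\card{\res{\pi}{B}}-1)!\,/\,2^{\card{\res{\pi}{B}}-1}$ maximal chains per factor.

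Next I would count the interleavings. Once a maximal chain is fixed in each factor, a maximal chain in $[\pi,\rho]$ is obtained by shuffling all $\sum_{B\in\rho}(\card{\res{\pi}{B}}-1)=\card{\pi}-\card{\rho}$ merger steps into a single linear order subject only to the constraint that steps coming from factor $B$ appear in their prescribed internal order. The number of such shuffles is the multinomial coefficient $(\card{\pi}-\card{\rho})!\,/\,\prod_{B\in\rho}(\card{\res{\pi}{B}}-1)!$, and distinct pairs (per-factor chain, interleaving) produce distinct maximal chains, as is immediate from the product structure.

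Multiplying the two contributions and cancelling the factors of $(\card{\res{\pi}{B}}-1)!$ yields
\begin{align*}
m(\pi,\rho)=(\card{\pi}-\card{\rho})!\prod_{B\in\rho}\frac{\card{\res{\pi}{B}}!}{2^{\card{\res{\pi}{B}}-1}}=2^{\card{\rho}-\card{\pi}}(\card{\pi}-\card{\rho})!\prod_{B\in\rho}\card{\res{\pi}{B}}!,
\end{align*}
which is the claimed formula. Essentially the only non-trivial point is the product-poset bookkeeping, and the answer can be sanity-checked on the extremal case $\pi=\Delta_{[n]}$, $\rho=\{[n]\}$, where it collapses to the classical $n!(n-1)!/2^{n-1}$.
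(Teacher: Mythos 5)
Your argument is correct and is essentially the paper's own proof: you count maximal chains in a full partition lattice via $\prod_{j=2}^{k}\binom{j}{2}=k!(k-1)!/2^{k-1}$, and then use the isomorphism of $[\pi,\rho]$ with the product of the lattices $\pset{\res{\pi}{B}}$ together with the multinomial count of interleavings of the per-factor merger steps, exactly as in the paper. No gaps to report.
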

\begin{proof}
Notice that any maximal chain $(\pi_1, \ldots, \pi_n)$ in $[\Delta_{[n]}, \{[n]\}]$ can be constructed as follows. Let $\pi_1\coloneqq \Delta_{[n]},$ and if $\pi_i$ with $i<n$ is constructed, $\pi_{i+1}$ is obtained by merging two blocks in $\pi_i,$ which can be done in ${\card{\pi_i} \choose 2}$ ways. When $\pi_n=\{[n]\}$ is reached, the construction is finished. This construction shows that there are $m(\Delta_{[n]}, \{[n]\})=\binom{n}{2}\binom{n-1}{2}\cdots\binom{2}{2}=2^{1-n}n!(n-1)!$ maximal chains in $[\Delta_{[n]}, \{[n]\}]$. Hence \eqref{eq:maximal_chains} holds in the case $(\pi, \rho)=(\Delta_{[n]}, \{[n]\})$.

For the general case, recall the isomorphism $[\pi, \rho]\cong \bigtimes_{B\in\rho} \pset{\res{\pi}{B}}.$ As a consequence, any maximal chain in $[\pi, \rho]$ can be built by choosing a maximal chain in each factor $\pset{\res{\pi}{B}}$ and then 'intertwining' these chains, i.e.~ordering their elements (excluding the first element --- the partition into singletons --- in each chain) in any order subject to the restriction that the order of elements of the same chain is preserved. Consequently, we have
\begin{align*}
  m(\pi, \rho) &= (\card{\pi}-\card{\rho})!\prod_{B\in\rho}[(\card{\res{\pi}{B}}-1)!]^{-1}\prod_{B\in\rho}m(\Delta_{\res{\pi}{B}}, \{\res{\pi}{B}\})\\
  &= (\card{\pi}-\card{\rho})!\prod_{B\in\rho} 2^{1-\card{\res{\pi}{B}}}\card{\res{\pi}{B}}!= 2^{\card{\rho}-\card{\pi}}(\card{\pi}-\card{\rho})!\prod_{B\in\rho}\card{\res{\pi}{B}}!.
\end{align*}
\end{proof}
\begin{lemma}\label{lemma:kingman_right_eigenvector}
For any $\rho\in\pset{[n]}$ the vector $(r_{\pi\rho})_{\pi\in\pset{[n]}}$ defined by
  \begin{align}\label{eq:eigenvectors}
    r_{\pi\rho} &\coloneqq \begin{cases}
      \frac{2^{\card{\pi}-\card{\rho}}(2\card{\rho}-1)!}{(\card{\pi}-\card{\rho})!(\card{\pi}+\card{\rho}-1)!}m(\pi, \rho) & \text{if } \pi\leq\rho,\\
      0 & \text{otherwise},
    \end{cases}\\\label{eq:eigenvectors2}
    &= \begin{cases}
      \frac{(2\card{\rho}-1)!}{(\card{\pi}+\card{\rho}-1)!}\prod_{B\in\rho}\card{\res{\pi}{B}}! & \text{if } \pi\leq\rho,\\
      0 & \text{otherwise},
    \end{cases}
  \end{align}
  is a right eigenvector of $Q$ with corresponding eigenvalue $-{\card{\rho}\choose 2}.$
\end{lemma}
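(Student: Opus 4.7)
The plan is to verify the eigenvalue equation $\sum_{\sigma}q_{\pi\sigma}r_{\sigma\rho}=-\binom{\card{\rho}}{2}r_{\pi\rho}$ directly from the definition of $Q$, in the spirit of the first proof of Lemma \ref{lemma:bs_right_eigenvector}. First I would check that the two expressions \eqref{eq:eigenvectors} and \eqref{eq:eigenvectors2} really do agree: this is an immediate substitution using Lemma \ref{lemma:maximal_chains}, since
\[
\frac{2^{\card{\pi}-\card{\rho}}(2\card{\rho}-1)!}{(\card{\pi}-\card{\rho})!(\card{\pi}+\card{\rho}-1)!}\cdot 2^{\card{\rho}-\card{\pi}}(\card{\pi}-\card{\rho})!\prod_{B\in\rho}\card{\res{\pi}{B}}!
\]
collapses to the form in \eqref{eq:eigenvectors2}. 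I will then work with the latter expression throughout.

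For the eigenvalue equation itself, the case $\pi\not\leq\rho$ is trivial: if $\pi\lessdot\sigma\leq\rho$ then $\pi\leq\rho$ would follow, so both sides vanish. So fix $\pi\leq\rho$. The equation reduces to
\[
\sum_{\sigma\colon\pi\lessdot\sigma\leq\rho} r_{\sigma\rho} = \left(\binom{\card{\pi}}{2}-\binom{\card{\rho}}{2}\right) r_{\pi\rho}.
\]
To evaluate the left-hand side I parameterize $\{\sigma\colon\pi\lessdot\sigma\leq\rho\}$ just as in the first proof of Lemma \ref{lemma:bs_right_eigenvector}: such a $\sigma$ is obtained by picking a block $B\in\rho$ and merging two of the $\card{\res{\pi}{B}}$ sub-blocks of $\res{\pi}{B}$, giving $\binom{\card{\res{\pi}{B}}}{2}$ neighbours for each $B$. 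For any such $\sigma$ one has $\card{\sigma}=\card{\pi}-1$, $\card{\res{\sigma}{B}}=\card{\res{\pi}{B}}-1$ and $\card{\res{\sigma}{D}}=\card{\res{\pi}{D}}$ for $D\in\rho\setminus\{B\}$, so
\[
\frac{r_{\sigma\rho}}{r_{\pi\rho}} = \frac{\card{\pi}+\card{\rho}-1}{\card{\res{\pi}{B}}}.
\]

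Substituting and pulling out the common factor $r_{\pi\rho}$,
\[
\sum_{\sigma\colon\pi\lessdot\sigma\leq\rho} r_{\sigma\rho}
= r_{\pi\rho}(\card{\pi}+\card{\rho}-1)\sum_{B\in\rho}\frac{1}{\card{\res{\pi}{B}}}\binom{\card{\res{\pi}{B}}}{2}
= r_{\pi\rho}\,\frac{\card{\pi}+\card{\rho}-1}{2}\sum_{B\in\rho}(\card{\res{\pi}{B}}-1),
\]
and using $\sum_{B\in\rho}\card{\res{\pi}{B}}=\card{\pi}$ this becomes $r_{\pi\rho}\,(\card{\pi}-\card{\rho})(\card{\pi}+\card{\rho}-1)/2$. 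The proof is then completed by the elementary identity
\[
\binom{\card{\pi}}{2}-\binom{\card{\rho}}{2} = \frac{(\card{\pi}-\card{\rho})(\card{\pi}+\card{\rho}-1)}{2}.
\]

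There is no real obstacle here beyond spotting the right parameterization and the last two-line algebraic simplification; everything else is routine bookkeeping about how $\card{\res{\pi}{B}}$ changes under a single merger. The argument is deliberately parallel to the first proof of Lemma \ref{lemma:bs_right_eigenvector}, which supports the paper's suggestion that this self-contained lattice-based technique should generalize to other coalescents.
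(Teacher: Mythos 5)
Your proposal is correct: the case analysis, the parameterization of the covers $\{\sigma\colon\pi\lessdot\sigma\leq\rho\}$ by choosing a block $B\in\rho$ and merging two of the $\card{\res{\pi}{B}}$ sub-blocks, the ratio $r_{\sigma\rho}/r_{\pi\rho}=(\card{\pi}+\card{\rho}-1)/\card{\res{\pi}{B}}$, and the final identity $\binom{\card{\pi}}{2}-\binom{\card{\rho}}{2}=(\card{\pi}-\card{\rho})(\card{\pi}+\card{\rho}-1)/2$ all check out, and the cases $\pi=\rho$ and $\pi\nleq\rho$ are handled properly. The overall strategy is the same as the paper's (direct verification of $\sum_{\sigma}q_{\pi\sigma}r_{\sigma\rho}=-\binom{\card{\rho}}{2}r_{\pi\rho}$ row by row), but the way you evaluate the cover sum differs: the paper works with the form \eqref{eq:eigenvectors} involving $m(\pi,\rho)$ and collapses $\sum_{\sigma\colon\pi\lessdot\sigma}r_{\sigma\rho}$ in one stroke via the chain-counting recursion $\sum_{\sigma\colon\pi\lessdot\sigma}m(\sigma,\rho)=m(\pi,\rho)$, invoking Lemma \ref{lemma:maximal_chains} only to pass to the product form \eqref{eq:eigenvectors2}; you instead verify the eigenvalue equation entirely from the product form, enumerating covers block by block in the spirit of the first proof of Lemma \ref{lemma:bs_right_eigenvector}. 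The paper's version makes the role of maximal chains explicit (which resonates with its later remark computing hitting probabilities from $m(\pi,\rho)$), whereas yours needs no chain counting in the verification itself and makes the parallel with the Bolthausen--Sznitman computation more visible, which is precisely the uniform lattice-based technique the paper suggests might extend to other coalescents. Both arguments are of essentially equal length and difficulty.
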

\begin{proof}
Fix $\pi, \rho\in\pset{[n]}.$ If $\pi<\rho,$ we have
  \begin{align*}
    &\sum_{\sigma\in\psetn}q_{\pi\sigma}r_{\sigma\rho}\\
    &= \sum_{\sigma\colon \pi\lessdot\sigma}r_{\sigma\rho}-{\card{\pi}\choose 2}r_{\pi\rho} \\
    &= \frac{2^{\card{\pi}-1-\card{\rho}}(2\card{\rho}-1)!}{(\card{\pi}-1-\card{\rho})!(\card{\pi}+\card{\rho}-2)!} \sum_{\sigma\colon\pi\lessdot\sigma}m(\sigma, \rho)-{\card{\pi}\choose 2}r_{\pi\rho}\\
    &= \left(\frac{(\card{\pi}-\card{\rho})(\card{\pi}+\card{\rho}-1)}{2}-\binom{\card{\pi}}{2}\right)r_{\pi\rho}= -{\card{\rho}\choose 2}r_{\pi\rho},
  \end{align*}
  where we used $\sum_{\sigma\colon\pi\lessdot\sigma}m(\sigma, \rho)=m(\pi, \rho).$
If $\pi=\rho,$ we have
  \begin{align*}
    \sum_{\sigma\in\pset{[n]}}q_{\pi\sigma}r_{\sigma\rho} &= q_{\pi\pi} = -{\card{\rho}\choose 2}r_{\pi\rho},
  \end{align*}
since $m(\pi, \pi)=1,$ hence $r_{\pi\pi}=1$. Finally, if $\pi\leq\rho$ does not hold, thus $r_{\pi\rho}=0$, we cannot have $\pi\leq\sigma\leq\rho$ for any $\sigma\in\pset{[n]}$ and therefore $\sum_{\sigma\in\pset{[n]}}q_{\pi\sigma}r_{\sigma\rho} = 0.$ This shows \eqref{eq:eigenvectors}. Now \eqref{eq:eigenvectors2} follows from Lemma \ref{lemma:maximal_chains} on the number of maximal chains.
\end{proof}
Evidently, the $B_n$ eigenvectors of $Q$ defined by \eqref{eq:eigenvectors} are linearly independent. We are now interested in the inverse matrix of $R= (r_{\pi\rho})_{\pi\rho\in\pset{[n]}},$ that is the matrix $L=(l_{\pi\rho})_{\pi, \rho\in\pset{[n]}}$ such that $\delta_{\pi\rho} = \sum_{\sigma\in\pset{[n]}} r_{\pi\sigma} l_{\sigma\rho}$ for all $\pi,\rho\in\pset{[n]}.$ 
\begin{lemma}\label{lemma:kingman_left_eigenvectors}
For any $\pi\in\pset{[n]}$ the vector $(l_{\pi\rho})_{\rho\in\pset{[n]}}$ given by
\begin{align}
l_{\pi\rho} &\coloneqq \begin{cases}\label{eq:kingman_lev1}
 (-1)^{\card{\pi}-\card{\rho}}\frac{2^{\card{\pi}-\card{\rho}}(\card{\pi}+\card{\rho}-2)!}{(2\card{\pi}-2)!(\card{\pi}-\card{\rho})!}m(\pi, \rho) & \text{if }\pi\leq\rho,\\
 0 & \text{otherwise,}
 \end{cases}\\
 &= \begin{cases}\label{eq:kingman_lev2}
 (-1)^{\card{\pi}-\card{\rho}}\frac{(\card{\pi}+\card{\rho}-2)!}{(2\card{\pi}-2)!}\prod_{B\in\rho}\card{\res{\pi}{B}}! & \text{if }\pi\leq\rho,\\
 0 & \text{otherwise,}
\end{cases}
\end{align}
is a left eigenvector of $Q$ with corresponding eigenvalue $-\binom{\card{\pi}}{2}$.
\end{lemma}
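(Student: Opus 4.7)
My plan is to mirror the proof of Lemma \ref{lemma:kingman_right_eigenvector}, but applied to the row-action of $Q$ rather than its column-action. First, I would check that the two displayed formulas \eqref{eq:kingman_lev1} and \eqref{eq:kingman_lev2} agree. This is a direct substitution: Lemma \ref{lemma:maximal_chains} gives $m(\pi,\rho)=2^{\card{\rho}-\card{\pi}}(\card{\pi}-\card{\rho})!\prod_{B\in\rho}\card{\res{\pi}{B}}!$, and plugging this into \eqref{eq:kingman_lev1} cancels the factor $2^{\card{\pi}-\card{\rho}}(\card{\pi}-\card{\rho})!$ and produces \eqref{eq:kingman_lev2}. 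In particular $l_{\pi\pi}=1$ because $m(\pi,\pi)=1$.

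Next I would prove the eigenvector identity $\sum_{\sigma\in\pset{[n]}} l_{\pi\sigma}q_{\sigma\rho}=-\binom{\card{\pi}}{2}l_{\pi\rho}$ by splitting on the relation between $\pi$ and $\rho$. Using the Kingman $Q$-matrix, the left-hand side reduces to
\begin{align*}
\sum_{\sigma\in\pset{[n]}} l_{\pi\sigma}q_{\sigma\rho} &= \sum_{\sigma\colon \sigma\lessdot\rho} l_{\pi\sigma}-\binom{\card{\rho}}{2}l_{\pi\rho}.
\end{align*}
If $\pi\not\leq\rho$, then every $\sigma$ with $\sigma\lessdot\rho$ satisfies $\sigma\leq\rho$, so $\pi\leq\sigma$ would force $\pi\leq\rho$; thus all terms vanish, as does $l_{\pi\rho}$, and both sides equal $0$. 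If $\pi=\rho$, the only surviving term is the diagonal one, giving $-\binom{\card{\pi}}{2}=-\binom{\card{\pi}}{2}l_{\pi\pi}$ as required.

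The substantive case is $\pi<\rho$. Here I would use form \eqref{eq:kingman_lev1} and factor the constants that depend only on $\card{\sigma}=\card{\rho}+1$ out of the sum, leaving $\sum_{\sigma\colon\pi\leq\sigma\lessdot\rho}m(\pi,\sigma)$. This sum equals $m(\pi,\rho)$ because every maximal chain from $\pi$ to $\rho$ is uniquely determined by its penultimate element (and conversely, appending $\rho$ to any maximal chain from $\pi$ to a $\sigma$ with $\sigma\lessdot\rho$ produces a maximal chain to $\rho$). Comparing the result with $l_{\pi\rho}$, the ratio becomes
\begin{align*}
\frac{\sum_{\sigma\colon\sigma\lessdot\rho}l_{\pi\sigma}}{l_{\pi\rho}} &= -\frac{(\card{\pi}-\card{\rho})(\card{\pi}+\card{\rho}-1)}{2},
\end{align*}
and the elementary identity $\binom{\card{\rho}}{2}-\binom{\card{\pi}}{2}=-(\card{\pi}-\card{\rho})(\card{\pi}+\card{\rho}-1)/2$ matches this exactly, yielding the eigenvalue $-\binom{\card{\pi}}{2}$.

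The only genuine obstacle is the chain-summation identity $\sum_{\sigma\colon \sigma\lessdot\rho}m(\pi,\sigma)=m(\pi,\rho)$, but this is immediate from the definition of a maximal chain. Everything else is bookkeeping on factorials and powers of $2$, wholly parallel to the computation in Lemma \ref{lemma:kingman_right_eigenvector}.
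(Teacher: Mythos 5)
Your proposal is correct and follows essentially the same route as the paper: use the form of $l_{\pi\rho}$ in \eqref{eq:kingman_lev1}, factor out the constants (since $\card{\sigma}=\card{\rho}+1$ for all $\sigma\lessdot\rho$), apply the chain identity $\sum_{\sigma\colon\sigma\lessdot\rho}m(\pi,\sigma)=m(\pi,\rho)$, and finish with the binomial identity; the equivalence of \eqref{eq:kingman_lev1} and \eqref{eq:kingman_lev2} via Lemma \ref{lemma:maximal_chains} is also exactly the paper's step. Your explicit treatment of the cases $\pi=\rho$ and $\pi\not\leq\rho$ is a harmless (and slightly more careful) addition to what the paper records.
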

\begin{proof}
Use $\sum_{\sigma\colon\sigma\lessdot\rho}m(\pi, \sigma)=m(\pi, \rho)$ to obtain
\begin{align*}
    &\sum_{\sigma\in\pset{[n]}} l_{\pi\sigma}q_{\sigma\rho}\\
    &= \sum_{\sigma\colon\sigma\lessdot\rho} l_{\pi\sigma} - \binom{\card{\rho}}{2}l_{\pi\rho}\\
    &= (-1)^{\card{\pi}-\card{\rho}-1}\frac{2^{\card{\pi}-\card{\rho}-1}(\card{\pi}+\card{\rho}-1)!}{(2\card{\pi}-2)!(\card{\pi}-\card{\rho}-1)!}\sum_{\sigma\colon\sigma\lessdot\rho}m(\pi, \sigma)-\binom{\card{\rho}}{2}l_{\pi\rho}\\
    &= \left(-\frac{(\card{\pi}+\card{\rho}-1)(\card{\pi}-\card{\rho})}{2}-\binom{\card{\rho}}{2}\right)l_{\pi\rho}= -\binom{\card{\pi}}{2}l_{\pi\rho},
  \end{align*}
  thus \eqref{eq:kingman_lev1} holds. Equation \eqref{eq:kingman_lev2} follows from the Lemma on the number of maximal chains, Lemma \ref{lemma:maximal_chains}.
\end{proof}

\begin{proof}(of Theorem \ref{thm:kingman_spectral_decomposition})
The inverse matrix of $R,$ let us call it $U=(u_{\pi\rho})_{\pi,\rho\in\pset{[n]}},$ is uniquely determined, and is a matrix of left eigenvectors of $Q$, i.e.~$UQ=DU$. Moreover, for any $\pi\in\pset{[n]}$ we have by assumption $u_{\pi\pi}=\sum_{\sigma\colon\pi\leq\sigma\leq\rho}u_{\pi\sigma}r_{\sigma\rho}=\delta_{\pi\pi}=1$. This uniquely determines a matrix of left eigenvectors of $Q$, since for any $\pi, \rho\in\pset{[n]}$ with $\pi<\rho$ we have
\begin{align*}
-\binom{\card{\rho}}{2}u_{\pi\rho} = \sum_{\sigma\in [\pi, \rho]} u_{\pi\sigma}q_{\sigma\rho} = 1_{\{\pi\lessdot\rho\}} +\sum_{\sigma\colon\pi<\sigma\leq\rho} u_{\pi\sigma}q_{\sigma\rho},
\end{align*}
and $u_{\pi\rho}=0$ for $\pi\nleq\rho.$ Since $QR=RD$ by Lemma \eqref{lemma:kingman_left_eigenvectors}, and evidently $l_{\pi\pi}=1$ for any $\pi\in\pset{[n]},$ we have $U=L$ and the claim follows.
\end{proof}

\begin{remark}
  Instead of calculating the hitting probabilites $h(\pi, \rho)$ of Kingman's $n$-coalescent via the spectral decomposition, Theorem \ref{thm:kingman_spectral_decomposition}, we use the observation from section \ref{sec:results} that the jump chain of $\Pi$ can be interpreted as the directed simple random walk on $\pset{[n]}.$ This implies that the jump chain of $\Pi$ (when started from $\pi$) traces out any maximal chain in $[\pi, \{[n]\}]$ with equal probability $m(\pi, \{[n]\})^{-1}$.
  Clearly, the total number of maximal chains in $[\pi, \{[n]\}]$ that contain $\rho$ is $m(\pi, \rho)m(\rho, \{[n]\}),$ and thus
  \[ h(\pi, \rho)=\frac{m(\pi, \rho)m(\rho, \{[n]\})}{m(\pi, \{[n]\})}= \binom{\card{\pi}-1}{\card{\rho}-1}^{-1}\frac{\card{\rho}!}{\card{\pi}!}\prod_{B\in\rho}\card{\res{\pi}{B}}! = \lah{\card{\pi}}{\card{\rho}}^{-1}\prod_{B\in\rho}\card{\res{\pi}{B}}!, \]
  \end{remark}
where we used the Lemma on the number of maximal chains, Lemma \ref{lemma:maximal_chains}, in the second step. In the special case $\pi=\Delta_{[n]}$ this formula was given by Kingman in \cite{MR671034}, equation (2.3).

\begin{proof}(of Corollary \ref{cor:kingman_spectral_decomposition_block_counting_proces})
In complete analogy to the argument in Corollary \ref{cor:bs_spectral_decomposition_block_counting_proces}, we have $q'_{ij}=\sum_{\rho\in\pset{[n], j}} q_{\pi\rho}$ independent of the particular partition $\pi\in\pset{[n], i},$ as the following calculation shows. Using the spectral decomposition of $Q,$ Theorem \ref{thm:kingman_spectral_decomposition}, we obtain
\begin{align*}
  q'_{ij} &= \sum_{\rho\in\pset{[n], j}} \sum_{\sigma\in [\pi, \rho]} r_{\pi\sigma}d_{\sigma\sigma}l_{\sigma\rho}\\
  &= -\sum_{\substack{\sigma\colon \pi\leq\sigma\\ \card{\sigma}\geq j}} \frac{(2\card{\sigma}-1)!}{(i+\card{\sigma}-1)!}\left(\prod_{B\in\sigma}\card{\res{\pi}{B}}!\right)\binom{\card{\sigma}}{2}\sum_{\substack{\rho\colon\sigma\leq\rho\\\card{\rho}=j}}(-1)^{\card{\sigma}+j}\frac{(\sigma+j-2)!}{(2\card{\sigma}-2)!}\prod_{B\in\rho}\card{\res{\sigma}{B}}!\\
  &= -\sum_{k=j}^i \frac{(2k-1)!}{(i+k-1)!}\lah{i}{k} \binom{k}{2}(-1)^{k+j}\frac{(k+j-2)!}{(2k-2)!}\lah{k}{j}= \sum_{k=j}^i r'_{ik}d'_{kk}l'_{kj},
\end{align*}
where in the third step we used the identity $\lah{i}{k}=\sum_{\sigma\in\pset{[i], k}}\prod_{B\in\sigma} \card{B}!$ twice. This identity is obvious from the interpretation of $\lah{i}{k}$ as the number of partitions into $k$ ordered blocks of a set of $i$ elements, where the elements in each block are ordered.
\end{proof}

{\bf Acknowledgement.}
  This is part of the authors' PhD theses. They would like to thank their respective supervisors Martin M\"{o}hle and Alison Etheridge for their support and numerous discussions. H.~P.~thankfully acknowledges financial support by the foundation "Private Stiftung Ewald Marquardt für Wissenschaft und Technik, Kunst und Kultur."

\bibliographystyle{abbrv}\bibliography{literature}

\begin{thebibliography}{10}

\bibitem{berestycki2013}
J.~Berestycki, N.~Berestycki, and J.~Schweinsberg.
\newblock The genealogy of {B}ranching {B}rownian {M}otion with absorption.
\newblock {\em Ann. Probab.}, 41(2):527--618, 03 2013.

\bibitem{raey}
J.~Bertoin and J.-F. Le~Gall.
\newblock The {B}olthausen–{S}znitman coalescent and the genealogy of
  continuous-state branching processes.
\newblock {\em Probability Theory and Related Fields}, 117(2):249--266, 2000.

\bibitem{2013arXiv1305.6043B}
M.~{Birkner}, J.~{Blath}, and B.~{Eldon}.
\newblock {Statistical properties of the site-frequency spectrum associated
  with Lambda-coalescents}.
\newblock {\em ArXiv e-prints}, May 2013.

\bibitem{Bolthausensznitman98}
E.~Bolthausen and A.-S. Sznitman.
\newblock On \uppercase{R}uelle's probability cascades and an abstract cavity
  method.
\newblock {\em Comm. Math. Phys.}, 197(2):247--276, 1998.

\bibitem{BrunetDerrida2007}
E.~Brunet, B.~Derrida, A.~Mueller, and S.~Munier.
\newblock Effect of selection on ancestry: An exactly soluble case and its
  phenomenological generalization.
\newblock {\em Phys. Rev. E}, 76:041104, Oct 2007.

\bibitem{BrunetDerrida2006}
E.~Brunet, B.~Derrida, A.~H. Mueller, and S.~Munier.
\newblock Noisy traveling waves: Effect of selection on genealogies.
\newblock {\em EPL (Europhysics Letters)}, 76(1):1, 2006.

\bibitem{donnelly1999}
P.~Donnelly and T.~G. Kurtz.
\newblock Particle representations for measure-valued population models.
\newblock {\em The Annals of Probability}, 27(1):166--205, 01 1999.

\bibitem{EJP265}
C.~Goldschmidt and J.~Martin.
\newblock Random recursive trees and the bolthausen-sznitman coalesent.
\newblock {\em Electron. J. Probab.}, 10:no. 21, 718--745, 2005.

\bibitem{MR671034}
J.~F.~C. Kingman.
\newblock The coalescent.
\newblock {\em Stochastic Process. Appl.}, 13(3):235--248, 1982.

\bibitem{MR1880231}
M.~M{\"o}hle and S.~Sagitov.
\newblock A classification of coalescent processes for haploid exchangeable
  population models.
\newblock {\em Ann. Probab.}, 29(4):1547--1562, 2001.

\bibitem{ECP3464}
M.~Möhle and H.~Pitters.
\newblock A spectral decomposition for the block counting process of the
  {B}olthausen-{S}znitman coalescent.
\newblock {\em Electron. Commun. Probab.}, 19:no. 47, 1--11, 2014.

\bibitem{Neher08012013}
R.~A. Neher and O.~Hallatschek.
\newblock Genealogies of rapidly adapting populations.
\newblock {\em Proceedings of the National Academy of Sciences},
  110(2):437--442, 2013.

\bibitem{DBLP:books/daglib/0095301}
J.~R. Norris.
\newblock {\em Markov chains}.
\newblock Cambridge series in statistical and probabilistic mathematics.
  Cambridge University Press, 1998.

\bibitem{MR1742892}
J.~Pitman.
\newblock Coalescents with multiple collisions.
\newblock {\em Ann. Probab.}, 27(4):1870--1902, 1999.

\bibitem{MR2245368}
J.~Pitman.
\newblock {\em Combinatorial stochastic processes}.
\newblock Springer-Verlag, Berlin, 2006.

\bibitem{MR1742154}
S.~Sagitov.
\newblock The general coalescent with asynchronous mergers of ancestral lines.
\newblock {\em J. Appl. Probab.}, 36(4):1116--1125, 1999.

\bibitem{Stanley:2011:ECV:2124415}
R.~P. Stanley.
\newblock {\em Enumerative Combinatorics: Volume 1}.
\newblock Cambridge University Press, New York, NY, USA, 2nd edition, 2011.

\end{thebibliography}
\end{document}